\documentclass{elsarticle}
\usepackage[a4paper,margin=1in]{geometry}
\usepackage{xcolor,soul,cancel}
\ifdefined\ManuscriptShowKeys\usepackage[color]{showkeys}\fi
\usepackage{amssymb,amsmath,amsthm,mathtools}
\usepackage{graphicx,float,enumitem,booktabs,hyperref}
\hypersetup{hidelinks}
\newlist{alphalist}{enumerate}{1} 
\setlist[alphalist]{label=(\alph*)} 
\numberwithin{equation}{section}

\newtheorem{thm}{Theorem}[section]
\newtheorem{lem}[thm]{Lemma}
\newtheorem{cor}[thm]{Corollary}
\newtheorem{prop}[thm]{Proposition}
\newtheorem{rem}[thm]{Remark}
\newtheorem{ex}[thm]{Example}

\newcommand{\eq} [1] {\begin{equation}\label{#1}\quad}
\newcommand{\en} {\end{equation}}

\newcommand{\diag}{\operatorname{diag}}

\theoremstyle{definition}

\theoremstyle{definition}
\newtheorem{definition}{Definition}

\DeclareMathOperator{\spanop}{span}
\DeclareMathOperator{\dist}{dist}
\DeclareMathOperator{\Hess}{Hess}
\DeclareMathOperator{\Spec}{Spec}
\DeclareMathOperator{\Vol}{Vol}
\DeclareMathOperator{\Rea}{Re}
\title{Rayleigh-Residual Flow I: Normal Matrices}
\author[eric]{Eric Shen}
\ead{erick.2013@yandex.ru}
\address[eric]{Moscow State University, Moscow, 119991, Russia \\
Institute for Numerical Mathematics, Russian Academy of Sciences, Russia}
\date{}

\newcommand{\PaperBibliography}{references}
\providecommand{\PaperNormalMaxResidual}{1.89\times 10^{-8}}

\providecommand{\PaperDiskHaarCoverage}{15.0}
\providecommand{\PaperDiskHaarCertifiedCoverage}{0.0}

\providecommand{\PaperDiskHaarInitialRadius}{0.057}
\providecommand{\PaperDiskHaarFinalRadius}{0.243}
\providecommand{\PaperDiskHaarResidual}{0.168}
\providecommand{\PaperDiskOneCoverage}{64.2}
\providecommand{\PaperDiskOneCertifiedCoverage}{0.0}

\providecommand{\PaperDiskOneResidual}{0.158}
\providecommand{\PaperDiskFourCoverage}{77.5}
\providecommand{\PaperDiskFourCertifiedCoverage}{1.7}

\providecommand{\PaperDiskFourResidual}{0.150}

\begin{document}
\begin{frontmatter}
\begin{abstract}
We study the squared Rayleigh-residual functional on complex projective
space and its negative gradient flow. For an arbitrary complex matrix,
every positive-residual critical point has a strictly descending tangent
direction; consequently, the local minima are precisely the eigenlines.
For normal matrices, we describe the critical set in terms of circles
through spectral points and identify the amplitude dynamics with a
replicator equation whose payoff matrix has rank at most four. This gives
explicit logarithmic first integrals and a reduced representation of
interior trajectories. We compute transverse escape rates near critical
components and analyze how polynomial reweighting changes unstable
coordinates. Finally, we distinguish local saddle trapping from
concentration of Haar-distributed initial states and derive the density
of the filtered ensemble. Numerical examples with random spectra and a
Jordan block illustrate convergence, nonmonotone Rayleigh motion, and
the improvement of finite-time spectral exploration by low-degree
random polynomial filters.
\\ \ \\
MSC2020: 15A18, 37C10, 65F15, 15A60.
\end{abstract}

\begin{keyword}
Rayleigh residual; normal matrices; gradient flow; critical manifolds; numerical shadow; replicator dynamics; polynomial filtering.
\end{keyword}

\end{frontmatter}
\section{Introduction}\label{sec:introduction}

Given an arbitrary square matrix $A\in M_n(\mathbb C)$, we study the
functional $\rho_A$ on the unit sphere
$\mathbb S^{2n-1}\subset\mathbb C^n$ and, more importantly, its gradient
vector field and associated flow $\Phi_A^t$. The functional is given by
\[
    z_A(u)=u^*Au,\qquad
    R_A(u)=Au-z_A(u)u,\qquad
    \rho_A(u)=\|R_A(u)\|^2,
\]
that is, $\rho_A(u)$ is the squared Rayleigh residual. We show in
Theorem~\ref{thm:no_pos_res_min} that a unit vector $u$ is a local
minimum of $\rho_A$ if and only if it is an eigenvector of $A$.
Proposition~\ref{prop:gradient-flow} gives an explicit gradient formula,
so each Euler step requires only one matrix-vector product with $A$
and one with $A^*$.

These observations suggest a matrix-free procedure for randomized
parallel computation of eigenpairs of a normal matrix:
\begin{enumerate}
    \item Take a Haar-distributed unit vector $u_0$.
    \item While $\rho_A(u_k)>\varepsilon^2$, take retracted Euler steps
    of the negative gradient flow.
    \item Once the residual reaches this threshold, return $u_k$ and
    $z_A(u_k)$. For normal $A$, the latter lies within $\varepsilon$ of
    the spectrum; see Proposition~\ref{prop:gradient-flow}.
\end{enumerate}
Taking many independent starts at once produces an ensemble of
trajectories. Theorem~\ref{thm:no_pos_res_min} rules out attracting local
minima with positive residual. Positive-residual critical points still
exist, however, and trajectories starting on their stable manifolds
need not converge to eigenvectors. In addition, different eigenpairs
can have very different sampling probabilities, and trajectories can
spend a long finite time near the saddle set.

\par There are two classical viewpoints closely related to the structures appearing below. First, Burke, Lewis, and Overton \cite{BLO03} proved that the least-singular-value function

$$ z\longmapsto \sigma_{\min}(A-zI) $$

has no local minimizers away from the spectrum. Their problem and ours arise from opposite partial minimizations of the joint residual

$$ F(z,u)=\|(A-zI)u\|^2: \qquad \rho_A(u)=\min_{z\in\mathbb C}F(z,u), \qquad \sigma_{\min}(A-zI)^2=\min_{\|u\|=1}F(z,u). $$

 Second, in the normal case the squared pairwise eigenvalue distances form a Euclidean distance matrix. Classical results of Gower on Euclidean distance matrices (see \cite{GOWER198581}) underlie both the circumcenter geometry of the critical set (see Theorem~\ref{thm:normal-critical-set}) and the rank-four structure responsible for the first integrals in Section~\ref{sec:qual_prop}, see Proposition~\ref{prop:first-integrals} and Remark~\ref{rem:edm-rank} after it.

We study the qualitative properties of the normal flow in
Section~\ref{sec:qual_prop}. Theorem~\ref{thm:normal-critical-set}
characterizes its critical components by subsets of eigenvalues lying
on a common circle, and Theorem~\ref{thm:gen_crit_skel} describes the
generic pair and triple components. Section~\ref{sec:quan_prop} gives
the corresponding transverse escape rates and local trapping estimates;
see Theorem~\ref{thm:local-trapping}.
In Section~\ref{sec:critical-filtering} we analyze polynomial filters
as perturbations of the unstable coordinates, including an exact
pair-saddle calculation in Proposition~\ref{prop:pair-escape} and
Corollary~\ref{cor:pair-acceleration}.
Finally, Section~\ref{sec:concentration} studies concentration of the
initial ensemble and the change of sampling measure under polynomial
filtering. The numerical examples illustrate both the local convergence
mechanism and the separate question of spectral coverage.

\section{Residual flow}\label{sec:residual-flow}

In this section we define the aforementioned flow and discuss its basic properties.
\begin{definition} \label{def:bas_def}
    
Let $A\in M_n(\mathbb C)$. For a unit vector $u\in\mathbb C^n$, define
\[
    z_A(u)=u^*Au,
    \qquad
    R_A(u)=Au-z_A(u)u,
    \qquad
    \rho_A(u)=\|R_A(u)\|^2.
\]
\end{definition}
The most basic properties of the residual functional are collected in the following Proposition.
\begin{prop} \label{prop:basic_prop}
The following properties hold.

\begin{enumerate}
    \item \emph{Phase invariance.} For every \(\theta\in\mathbb R\),
    \[
        z_A(e^{i\theta}u)=z_A(u),\qquad
        R_A(e^{i\theta}u)=e^{i\theta}R_A(u), \qquad
        \rho_A(e^{i\theta}u)=\rho_A(u).
    \]
        
    In particular, \(\rho_A\) descends to a well-defined function on
    \(\mathbb CP^{n-1}\).

    \item \emph{Unitary equivariance.} If \(U\) is unitary, then
        $\rho_{U^*AU}(v)=\rho_A(Uv). $

    \item \emph{Affine covariance in the matrix variable.} For
    \(\alpha,\lambda\in\mathbb C\),
        $\rho_{\alpha A+\lambda I}(u)=|\alpha|^2\rho_A(u)$
\end{enumerate}

Consequently, the projective residual landscape is intrinsic under unitary
similarity: replacing \(A\) by \(U^*AU\) only changes coordinates on
\(\mathbb CP^{n-1}\). Moreover, if \(\Phi_A^t\) denotes the negative
gradient flow of \(\rho_A\) on projective space, with respect to the standard
Fubini--Study metric, then
\[
        \Phi_{U^*AU}^t([U^*u])
        =
        [\,U^*\Phi_A^t([u])\,],
\]
and
\[
        \Phi_{\alpha A+\lambda I}^t
        =
        \Phi_A^{|\alpha|^2 t}.
\]

\end{prop}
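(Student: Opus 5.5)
The plan is to verify the three identities directly from Definition~\ref{def:bas_def}, and then to obtain the flow statements from the invariance properties of the Fubini--Study metric.

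\emph{Phase invariance.} Since $(e^{i\theta}u)^*=e^{-i\theta}u^*$, we get $z_A(e^{i\theta}u)=z_A(u)$. Substituting into $R_A$ gives $R_A(e^{i\theta}u)=e^{i\theta}Au-z_A(u)e^{i\theta}u=e^{i\theta}R_A(u)$, and taking norms yields invariance of $\rho_A$. Because the fibres of the Hopf map $\mathbb S^{2n-1}\to\mathbb CP^{n-1}$ are exactly the phase orbits, $\rho_A$ descends to projective space.

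\emph{Unitary equivariance.} For unitary $U$ and unit $v$, $Uv$ is a unit vector. We have $z_{U^*AU}(v)=v^*U^*AUv=z_A(Uv)$, and therefore $R_{U^*AU}(v)=U^*(AUv-z_A(Uv)Uv)=U^*R_A(Uv)$. Since $U^*$ is an isometry, $\rho_{U^*AU}(v)=\rho_A(Uv)$.

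\emph{Affine covariance.} We have $z_{\alpha A+\lambda I}(u)=\alpha z_A(u)+\lambda$, using $u^*u=1$. Hence $R_{\alpha A+\lambda I}(u)=\alpha Au+\lambda u-\alpha z_A(u)u-\lambda u=\alpha R_A(u)$, and taking the squared norm gives the factor $|\alpha|^2$.

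\emph{Flow statements.} Let $\psi_U:[w]\mapsto[U^*w]$ on $\mathbb CP^{n-1}$. Since $U^*$ is unitary, $\psi_U$ is an isometry of the Fubini--Study metric. Part~2 can be rewritten as $\rho_{U^*AU}\circ\psi_U=\rho_A$, because $\rho_{U^*AU}([U^*w])=\rho_A([UU^*w])=\rho_A([w])$. For an isometry $\psi$ and any function $f$, the gradients transform naturally: $d\psi\,\nabla(f\circ\psi)=(\nabla f)\circ\psi$. Hence $\psi_U$ maps integral curves of $-\nabla\rho_A$ to integral curves of $-\nabla\rho_{U^*AU}$. By uniqueness of solutions of ODEs, $\Phi^t_{U^*AU}(\psi_U[u])=\psi_U(\Phi^t_A[u])$, which is the claimed conjugacy.

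For the affine statement, part~3 gives $\nabla\rho_{\alpha A+\lambda I}=|\alpha|^2\nabla\rho_A$. If $\gamma(t)$ solves $\dot\gamma=-\nabla\rho_A(\gamma)$, then $\tilde\gamma(t)=\gamma(|\alpha|^2t)$ solves the equation with vector field $-|\alpha|^2\nabla\rho_A$. Uniqueness then gives $\Phi^t_{\alpha A+\lambda I}=\Phi^{|\alpha|^2t}_A$. In the degenerate case $\alpha=0$ both sides are the identity.

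The identities themselves are routine. The only step needing care is the naturality of the gradient under isometries on $\mathbb CP^{n-1}$. One could check it intrinsically, via $g(\nabla(f\circ\psi),X)=d f(d\psi X)=g(\nabla f,d\psi X)$ together with $\psi^*g=g$. Alternatively, one can lift to the sphere: work with the horizontal gradient there, and use that $U^*$ preserves both the round metric and the horizontal distribution orthogonal to $iu$. Global existence of the flows is automatic, since $\mathbb CP^{n-1}$ is compact and the vector field is smooth.
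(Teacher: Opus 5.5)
Your proposal is correct and takes the same route as the paper: you verify the three identities directly from the definitions, then deduce the flow conjugacy and the time rescaling from the unitary invariance of the Fubini--Study metric. The only difference is that you write out the gradient-naturality argument for the isometry $[w]\mapsto[U^*w]$ and the reparametrization for the vector field $-|\alpha|^2\nabla\rho_A$, which the paper compresses into a one-line remark.
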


\begin{proof}
The phase identities follow directly from
\[
        (e^{i\theta}u)^*A(e^{i\theta}u)=u^*Au, \,\,\, \text{and}
\]
\[
        A(e^{i\theta}u)-z_A(u)e^{i\theta}u
        =
        e^{i\theta}(Au-z_A(u)u).
\]
For unitary equivariance, put \(v=U^*u\). Then
\[
        z_B(v)
        =
        v^*Bv
        =
        u^*U(U^*AU)U^*u
        =
        u^*Au
        =
        z_A(u).
\]
Therefore
\[
        R_B(v)
        =
        Bv-z_B(v)v
        =
        U^*AUU^*u-z_A(u)U^*u
        =
        U^*(Au-z_A(u)u)
        =
        U^*R_A(u),
\]
and taking norms gives \(\rho_B(v)=\rho_A(u)\).

Finally,
\[
        z_{\alpha A+\lambda I}(u)
        =
        u^*(\alpha A+\lambda I)u
        =
        \alpha z_A(u)+\lambda, \,\,\, \text{so}
\]
\[
        R_{\alpha A+\lambda I}(u)
        =
        (\alpha A+\lambda I)u-(\alpha z_A(u)+\lambda)u
        =
        \alpha(Au-z_A(u)u)
        =
        \alpha R_A(u).
\]
Thus
\[
        \rho_{\alpha A+\lambda I}(u)=|\alpha|^2\rho_A(u).
\]
The corresponding statements for the gradient flow follow from these
identities and from the unitary invariance of the Fubini--Study metric.
\end{proof}

Thus, when studying properties of the residual flow, one may replace the matrix by a unitarily similar one. The next theorem describes the local minima of the residual functional. It rules out positive-residual local minima, for normal and nonnormal matrices alike.
\begin{thm}\label{thm:no_pos_res_min}
Let $u\in\mathbb C^n$, $\|u\|=1$, be such that $[u]$ is a critical point of $\rho_A$ on $\mathbb{CP}^{n-1}$.
Then exactly one of the following alternatives holds.

\begin{enumerate}
    \item $\rho_A(u)=0$. In this case $R_A(u)=0$, so $u$ is an eigenvector of $A$.
    \item $\rho_A(u)>0$. In this case $u$ has a strictly descending projective tangent direction. More precisely, there exists a horizontal tangent vector $h\in\mathbb C^n$ satisfying
    \[
        u^*h=0,
        \qquad
        \|h\|=1,
    \]
    such that, with the convention
    \[
        \rho_A(u_h(t))
        =
        \rho_A(u)
        +\frac{t^2}{2}\,\Hess \rho_A(u)[h,h]
        +O(t^3),
    \]
    one has
    \[
        \Hess \rho_A(u)[h,h]
        \leq
        -2\rho_A(u).
    \]
\end{enumerate}
Consequently, no critical point with positive residual can be a local minimum of $\rho_A$ on $\mathbb{CP}^{n-1}$.
\end{thm}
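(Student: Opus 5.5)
The plan is to bound $\rho_A$ along a great circle from above using the joint residual from the introduction. For unit $v$ and every $\zeta\in\mathbb C$,
\[
\rho_A(v)=\|(A-\zeta I)v\|^2-|z_A(v)-\zeta|^2 .
\]
This holds because $R_A(v)\perp v$ and $(A-\zeta I)v=R_A(v)+(z_A(v)-\zeta)v$. I will evaluate this identity along the curve $u_h(t)=\cos t\,u+\sin t\,h$ with $u^*h=0$, $\|h\|=1$, expand to order $t^2$, and choose both $h$ and $\zeta$ well. The first alternative is immediate: $\rho_A(u)=0$ means $R_A(u)=0$, so $u$ is an eigenvector with eigenvalue $z_A(u)$. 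From now on assume $r:=\|R_A(u)\|>0$ and write $z_0=z_A(u)$, $B=A-z_0I$.

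First I will extract the critical point equation. By the envelope principle ($z_0$ minimises $F(\cdot,u)$), the differential of $\rho_A$ at $u$ is that of $F(z_0,\cdot)$, namely $h\mapsto 2\operatorname{Re}\langle B^*R_A(u),h\rangle$. Criticality on $\mathbb{CP}^{n-1}$ then forces $B^*R_A(u)\parallel u$. Pairing with $u$ gives
\[
B^*Bu=\rho_A(u)\,u .
\]
Thus $u$ is a right singular vector of $B$ with singular value $r$, and $u^\perp$ is invariant under $B^*B$. This also kills the cross term $\operatorname{Re}\langle Bu,Bh\rangle$ for every $h\perp u$. The expansion at $\zeta=z_0$ then reads
\[
\rho_A(u_h(t))=\rho_A(u)+t^2\bigl(\|Bh\|^2-\rho_A(u)-|h^*R_A(u)+u^*Bh|^2\bigr)+O(t^3).
\]
Hence the target $\Hess\rho_A(u)[h,h]\le-2\rho_A(u)$ is equivalent to finding a horizontal unit $h$ with
\[
\|Bh\|^2\le |h^*R_A(u)+u^*Bh|^2 .
\]
The phase of $h$ is free: replacing $h$ by $e^{i\varphi}h$ lets me align the two terms, so the right-hand side can be taken to be $(|h^*R_A(u)|+|u^*Bh|)^2$.

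Second, I will choose $h$. Splitting $Bh=(u^*Bh)u+PBh$ with $P=I-uu^*$, the inequality becomes
\[
\|PBh\|^2\le |h^*R_A(u)|^2+2|h^*R_A(u)|\,|u^*Bh| .
\]
The first choice I would try is an eigenvector $h$ of the compression $PAP|_{u^\perp}$, with eigenvalue $\mu$. If that fails, I would re-centre instead: use the identity with $\zeta=\mu$ rather than $z_0$, so that $P(A-\mu I)h=0$ and $(A-\mu I)h=(u^*Ah)\,u$ lies entirely along $u$. Along the curve $u_h(t)$ the vector $(A-\mu I)u_h(t)$ then has an explicit, essentially two-dimensional form, and the $t^2$ coefficient should reduce to a computation in the $2\times2$ compression of $A$ to $\operatorname{span}(u,h)$. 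The critical point equation $B^*Bu=\rho_A(u)u$ then supplies the relation between $u^*Ah$ and $h^*R_A(u)$ that is needed.

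The main obstacle is exactly this selection step. The $\|Bh\|^2$ term, or its re-centred analogue, is not controlled by criticality for a generic horizontal $h$, and the obvious choice $h=R_A(u)/r$ fails because $\|BR_A(u)\|$ is unconstrained. So the estimate must come from a structured choice: an eigenvector of the compression combined with re-centring, or failing that a minimising singular direction of $B$ on $u^\perp$ with its phase optimised. After that, the factor $2$ in $-2\rho_A(u)$ should come out from the $-\rho_A(u)$ already present plus $|h^*R_A(u)|^2$ after phase alignment. The final sentence of the theorem follows at once, since a strictly negative second derivative along $u_h$ means $[u]$ is not a local minimum.
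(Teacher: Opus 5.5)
You correctly reduce the claim to $\|PBh\|^2\le |h^*R_A(u)|^2+2|h^*R_A(u)|\,|u^*Bh|$, but there is a genuine gap: the choice of $h$, which you yourself call the main obstacle, is never made, and the routes you suggest do not make it.

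The setup itself is sound and matches the paper. Your $t^2$-coefficient $\|Bh\|^2-\rho_A(u)-|h^*R_A(u)+u^*Bh|^2$ is the paper's $Q(h)$, and the phase-alignment reduction is correct.

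The suggested choices fail as follows.
\begin{enumerate}
\item \emph{Eigenvector of the compression.} Take an eigenvector $h$ of $PAP|_{u^\perp}$ with eigenvalue $\mu$. Then $PBh=(\mu-z_0)h$. Criticality, written as $v^*A=z_0v^*+r\,u^*$ with $v=R_A(u)/r$, gives $\mu\,v^*h=v^*Ah=z_0\,v^*h$. So if $\mu\neq z_0$, then $v^*h=0$, hence $h^*R_A(u)=0$, and your inequality becomes $|\mu-z_0|^2\le 0$, which fails.
\item \emph{Re-centring at $\zeta=\mu$.} This cannot repair it. Your displayed formula is an identity ($\rho_{A-\mu I}=\rho_A$), so the $t^2$-coefficient is unchanged and still contains $\|PBh\|^2=|\mu-z_0|^2$.
\item \emph{Minimising singular direction.} This works only once you know that $\min\|PBh\|$ over unit $h\perp u$ is zero. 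That is exactly the unproved point.
\end{enumerate}

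The missing idea is a dimension count driven by the critical equation in the form $B^*v=r\,u$. For every $h\perp u$ we have $v^*Bh=(B^*v)^*h=r\,u^*h=0$. So $PB$ maps the $(n-1)$-dimensional space $u^\perp$ into the $(n-2)$-dimensional space $u^\perp\cap v^\perp$, and therefore has a nonzero kernel.

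A unit $h$ in that kernel satisfies $Bh=(u^*Bh)\,u$. In other words, $h$ is an eigenvector of the compression with eigenvalue \emph{exactly} $z_0$. Your first instinct was right, but the existence of this particular eigenvalue is the whole content of the proof. For this $h$ the left side $\|PBh\|^2$ vanishes, and with your phase alignment $Q(h)\le-\rho_A(u)$ follows at once. This is the paper's argument, phrased there via $T=P_WB|_{H_u}$ with $W=\operatorname{span}\{u,v\}^{\perp}$.

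Your closing remark is also off. The factor $2$ in $-2\rho_A(u)$ comes only from the convention $\frac{t^2}{2}\Hess$, as your own earlier equivalence shows. It does not come from the term $|h^*R_A(u)|^2=r^2|h^*v|^2$, which is generally strictly smaller than $\rho_A(u)$.
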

We will use the following Lemma which gives equations for the critical point. Until the end of this section we fix the notation:
$\|u\|=1$, $z=u^*Au$, $B=A-zI$, and $s^2=\|Bu\|^2$.
\begin{lem}\label{lem:crit_point_equ}
A vector $u$ is critical for $\rho_A$ on $\mathbb{CP}^{n-1}$ if and only if  $B^*Bu=s^2u$.
If $s>0$ and $v=\frac{Bu}{s}$,
then
\[
    Bu=sv,
    \qquad
    B^*v=su,
    \qquad
    u^*v=0,
    \qquad
    \|v\|=1.
\]
\end{lem}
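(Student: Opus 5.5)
We compute the first variation of $\rho_A$ along horizontal curves and read off the criticality condition. For a unit vector $u$ and a horizontal direction $h$ ($u^*h=0$), consider the curve
\[
u_t=\frac{u+th}{\|u+th\|},
\]
which represents a general tangent direction at $[u]$ in $\mathbb{CP}^{n-1}$. By Proposition~\ref{prop:basic_prop}, phase directions $h=i\theta u$ do not change $\rho_A$, so horizontal $h$ suffice. The key simplification is that
\[
\rho_A(u)=\min_{w\in\mathbb C}\|(A-wI)u\|^2,
\]
with the minimum attained at $w=z_A(u)$. This follows because $\|Au-wu\|^2$ is a quadratic in $w$ minimized at the orthogonal projection coefficient $u^*Au$.

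By an envelope (Danskin) argument, the first derivative of $\rho_A(u_t)$ at $t=0$ equals the derivative of
\[
\|(A-zI)u_t\|^2=\|Bu_t\|^2
\]
with $z$ frozen. Since $\|u+th\|^2=1+t^2\|h\|^2$ has no linear term, only the numerator contributes at first order, giving
\[
\frac{d}{dt}\Big|_{t=0}\rho_A(u_t)=2\operatorname{Re}\,h^*B^*Bu .
\]
Replacing $h$ by $ih$ also captures the imaginary part. Hence $[u]$ is critical if and only if $h^*B^*Bu=0$ for all $h\perp u$, i.e. $B^*Bu\in\spanop\{u\}$, say $B^*Bu=cu$. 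Pairing with $u$ gives
\[
c=u^*B^*Bu=\|Bu\|^2=s^2,
\]
so criticality is equivalent to $B^*Bu=s^2u$. The converse direction is the same computation read backwards. The only care needed is to justify the envelope step; alternatively one can differentiate $z_A(u_t)$ explicitly and note that its contribution is multiplied by $u^*Bu=z-z=0$. I would present this explicit version to avoid invoking Danskin's theorem.

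For the second part, assume $s>0$ and set $v=Bu/s$. Then $\|v\|=1$ and $Bu=sv$ hold by definition. Next,
\[
B^*v=\frac{B^*Bu}{s}=\frac{s^2u}{s}=su
\]
by the criticality equation. Finally,
\[
u^*v=\frac{u^*Bu}{s}=\frac{u^*Au-z}{s}=0 .
\]
The only real obstacle is the first-variation computation, and it is routine once $u^*Bu=0$ is used.
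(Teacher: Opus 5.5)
Your proposal is correct and takes essentially the same route as the paper: a first variation along horizontal $h$, with $u^*Bu=0$ cancelling the contribution of the varying Rayleigh quotient, gives $D\rho_A(u)[h]=2\operatorname{Re}\,h^*B^*Bu$ and hence $B^*Bu=s^2u$; the identities for $v$ then follow by the same direct substitution. The paper writes $\rho_A=\rho_B$ and notes $|(u+th)^*B(u+th)|^2=O(t^2)$, whereas you use the envelope/chain-rule version of that cancellation, and your explicit normalization $u_t=(u+th)/\|u+th\|$ is slightly more careful than the paper's use of the unnormalized $u+th$.
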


\begin{proof}
Take a horizontal tangent vector $h\in H_u$. Since $u^*Bu=0$,
$$
    \rho_A(u+th)=\rho_B(u+th)=\|B(u+th)\|^2-|(u+th)^*B(u+th)|^2. $$
The derivative of the first term is
$$
    \frac{d}{dt}\bigg|_{t=0}\|B(u+th)\|^2
    =
    2\operatorname{Re}\,(Bu)^*Bh
    =
    2\operatorname{Re}\,h^*B^*Bu.
$$
The derivative of the second term is zero, because since $u^*Bu=0$ we have
$$
    (u+th)^*B(u+th)=u^*Bu + (u^*Bh+h^*Bu)t + O(t^2)=O(t).
$$
Therefore
\[
    D\rho_A(u)[h]
    =
    2\operatorname{Re}\,h^*B^*Bu
    \qquad (h\in H_u).
\]
Thus $u$ is critical on projective space if and only if
\[
    \operatorname{Re}\,h^*B^*Bu=0
    \qquad
    \text{for all }h\in H_u.
\]
This is equivalent to $B^*Bu$ being collinear with $u$. Hence $B^*Bu=\mu u$ for some scalar $\mu$. Taking the inner product with $u$ gives
\[
    \mu
    =u^*B^*Bu
    =\|Bu\|^2
    =s^2.
\]
So the critical point equation is
\[
    B^*Bu=s^2u.
\]

Now assume $s>0$ and set $v=Bu/s$. Then $Bu=sv$ and $\|v\|=1$. Also
\[
    u^*v=\frac{u^*Bu}{s}=0.
\]
Finally,
\[
    B^*v
    =\frac{B^*Bu}{s}
    =\frac{s^2u}{s}
    =su.
\]
This proves the lemma.
\end{proof}
The proof of the Theorem is now straightforward: we invoke the identities from Lemma~\ref{lem:crit_point_equ} and show that the Hessian of $\rho_A(u)$ has a negative direction whenever $\rho_A(u)>0$. 
\begin{proof}[Proof of Theorem~\ref{thm:no_pos_res_min}]
    If $s=0$, then $\|Bu\|=0$, hence $Bu=0$. Since $B=A-zI$, this means
\[
    Au=zu,
\]
so $u$ is an eigenvector of $A$.

Now suppose $s>0$. By Lemma~\ref{lem:crit_point_equ},
\[
    v=\frac{Bu}{s}
\]
is well-defined and satisfies
\[
    \|v\|=1,
    \qquad
    u^*v=0,
    \qquad
    Bu=sv,
    \qquad
    B^*v=su.
\]
In particular, $u$ and $v$ are orthonormal. Define $W=\spanop\{u,v\}^{\perp}$ and observe that
\[
    \dim_{\mathbb C}H_u=n-1,
    \qquad
    \dim_{\mathbb C}W=n-2.
\]
Consider the linear map
\[
    T=P_WB|_{H_u}:H_u\longrightarrow W,
\]
where $P_W$ is the orthogonal projection onto $W$. The domain is larger than the codomain, so there is non-zero $h\in\ker T$.
Then $h\in H_u$, so $u^*h=0$, and $P_WBh=0$. Moreover,
\[
    v^*Bh=(B^*v)^*h=(su)^*h=su^*h=0.
\]
Thus $Bh$ has no $W$-component and no $v$-component. Since $\mathbb C^n=\spanop\{u\}\oplus\spanop\{v\}\oplus W$
is an orthogonal decomposition, we conclude that $Bh=\alpha u$ for some $\alpha\in\mathbb C$.
Now observe that $$
    \frac{1}{2}\Hess\rho_A(u)[h,h]=Q(h)     =
    \|Bh\|^2
    -s^2\|h\|^2
    -\left|u^*Bh+h^*Bu\right|^2. 
$$

Since $Bu=sv$ and $Bh=\alpha u$, we have
\[
    u^*Bh=\alpha,
    \qquad
    h^*Bu=s h^*v.
\]
Therefore
\[
    Q(h)
    =
    |\alpha|^2-s^2\|h\|^2-|\alpha+s h^*v|^2.
\]

We may replace $h$ by $e^{i\theta}h$. This preserves all constraints: it remains horizontal, remains in $\ker T$, and still satisfies $B(e^{i\theta}h)=e^{i\theta}\alpha u$. Under this replacement,
\[
    \alpha\mapsto e^{i\theta}\alpha,
    \qquad
    h^*v\mapsto e^{-i\theta}h^*v.
\]
Thus the modulus term becomes $\left|e^{i\theta}\alpha+s e^{-i\theta}h^*v\right|$. Choose $\theta$ so that the two summands have the same argument. Hence, for a suitable phase choice,
\[
    \left|e^{i\theta}\alpha+s e^{-i\theta}h^*v\right|
    =
    |\alpha|+s|h^*v|.
\]
For this phase-rotated vector, which we again denote by $h$, we get
\[
\begin{aligned}
Q(h)
&=|\alpha|^2-s^2\|h\|^2-\bigl(|\alpha|+s|h^*v|\bigr)^2 \leq -s^2\|h\|^2.
\end{aligned}
\]
Since $h\neq0$, normalize it so that $\|h\|=1$. Then $Q(h)\leq -s^2$. Finally,
\[
    \Hess\rho_A(u)[h,h]=2Q(h)\leq -2s^2=-2\rho_A(u).
\]
Thus every positive-residual critical point has a descending projective tangent direction. In particular, it cannot be a local minimum.

\end{proof}

Let $F(z,u)=\|(A-zI)u\|^2,\,\|u\|=1.$
Minimizing \(F\) in \(z\) gives
$$ \rho_A(u)=\min_z F(z,u), \qquad z=u^*Au, $$
whereas minimizing in \(u\) gives
$$ \sigma_{\min}(A-zI)^2=\min_{\|u\|=1}F(z,u). $$

Burke, Lewis, and Overton proved in \cite{BLO03} that \(z\mapsto\sigma_{\min}(A-zI)\) has no positive local minima. Thus their result is complementary to Theorem~\ref{thm:no_pos_res_min}. The two statements are not equivalent: at a critical point of \(\rho_A\),
$$ (A-zI)^*(A-zI)u=\rho_A(u)u, $$
so \(u\) belongs to a singular-value branch of \(A-zI\), but this singular value need not be the smallest one.

\section{Examples of the residual flow}
\begin{prop}
\label{prop:gradient-flow}
Put $z=z_A(u)$, $R=R_A(u)$, and $\rho=\rho_A(u)$. With the metric
normalization used here, the horizontal gradient is
\[
    g_A(u)=2\bigl(A^*R-\overline zR-\rho u\bigr).
\]
Thus the negative gradient flow and its retracted Euler discretization are
\[
    \dot u=-g_A(u),\qquad
    u_{k+1}=\frac{u_k-hg_A(u_k)}{\|u_k-hg_A(u_k)\|}.
\]
Along the exact flow, $\dot\rho_A=-\|g_A\|^2$.
Computing $g_A(u)$ requires one application of $A$ and one of $A^*$.
If $A$ is normal, then
\[
    \dist\bigl(z_A(u),\Spec(A)\bigr)\leq\sqrt{\rho_A(u)}.
\]
\end{prop}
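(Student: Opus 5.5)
The plan is to differentiate $\rho_A$ along an arbitrary smooth curve on the sphere and read off the Riesz representative with respect to the real inner product $\langle x,y\rangle=\operatorname{Re}x^*y$. The first step is to compute $D\rho_A(u)[\dot u]$ for a general tangent vector $\dot u$ with $\operatorname{Re}u^*\dot u=0$, not only a horizontal one. The proof of Lemma~\ref{lem:crit_point_equ} already uses that $\rho_A(u)=\|Au\|^2-|u^*Au|^2$ on the unit sphere. Differentiating gives
\[
D\rho_A(u)[\dot u]=2\operatorname{Re}\,\dot u^*A^*Au-2\operatorname{Re}\bigl(\bar z\,(\dot u^*Au+u^*A\dot u)\bigr).
\]
Next I will rewrite $\operatorname{Re}(\bar z\,u^*A\dot u)=\operatorname{Re}(\dot u^*A^*u\,z)$, so that
\[
D\rho_A(u)[\dot u]=2\operatorname{Re}\,\dot u^*\bigl(A^*Au-\bar zAu-zA^*u\bigr).
\]
I then use $A^*R=A^*Au-zA^*u$ and $\bar zR=\bar zAu-|z|^2u$. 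With these, the bracket becomes $A^*R-\bar zR-|z|^2u$.

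The second step is to project this vector onto the tangent space. The component along $u$ is $u^*(A^*R-\bar zR)-|z|^2=(Au)^*R-|z|^2$, using $u^*R=0$. Since $(Au)^*R=\|R\|^2+\bar z\,u^*R=\rho$, this component equals $\rho-|z|^2$. Subtracting this multiple of $u$ leaves $A^*R-\bar zR-\rho u$, and this vector is orthogonal to $u$, hence horizontal. Subtracting a real multiple of $u$ does not change $\operatorname{Re}\dot u^*(\cdot)$ for tangent $\dot u$. So $g_A(u)=2(A^*R-\bar zR-\rho u)$ is the horizontal (Fubini--Study) gradient under the paper's normalization. The factor $2$ matches the formula $D\rho_A[h]=2\operatorname{Re}h^*B^*Bu$ of the Lemma, because $B^*Bu=A^*R-\bar zR$.

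The third step covers the remaining claims. The identity $\dot\rho_A=D\rho_A[-g_A]=-\|g_A\|^2$ follows from the gradient property. The cost claim holds because computing $Au$ gives $z$ and $R$, and one further product $A^*R$ completes $g_A$. For the normal bound, write $A=U\Lambda U^*$ and $w=U^*u$. Then
\[
\rho=\sum_j|\lambda_j-z|^2|w_j|^2\ \ge\ \min_j|\lambda_j-z|^2,
\]
since the weights $|w_j|^2$ sum to $1$. This is the classical Bauer--Fike/Kato argument.

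The main obstacle is the bookkeeping in the second step. There one must confirm that the $u$-component is real, so that it is a genuine normal component rather than a phase direction, and check that $u^*g_A=0$ exactly.
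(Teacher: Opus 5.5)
Your proposal is correct and follows essentially the paper's argument: you differentiate $\rho_A$, identify the Riesz representative $2(A^*-\bar z I)R=2B^*Bu$ for $\operatorname{Re}x^*y$, remove its $u$-component using $u^*(A^*-\bar zI)R=\rho$, and finish with the same cost count and the same weighted-average bound for normal $A$. The only cosmetic difference is that you differentiate along all sphere tangent vectors via $\rho_A=\|Au\|^2-|z|^2$, which produces an extra $-|z|^2u$ term that the projection removes. The paper instead restricts to horizontal $h$ from the start and uses $u^*R=0$.
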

\begin{proof}
For a horizontal tangent vector $h$, differentiation gives
\[
    D\rho_A(u)[h]
    =2\operatorname{Re}\bigl(h^*(A^*-\overline zI)R\bigr),
\]
because $u^*R=0$. Moreover,
$u^*(A^*-\overline zI)R=\rho$, so horizontal projection gives the
stated gradient. The decrease of $\rho_A$ follows from the gradient-flow
identity. To evaluate the formula, first compute $Au$, then $A^*R$;
all remaining operations are scalar products and vector updates.
For normal $A$, an orthonormal eigenbasis gives
$\rho_A(u)=\sum_i|u_i|^2|\lambda_i-z_A(u)|^2$.
This weighted average is at least
$\dist(z_A(u),\Spec(A))^2$.
\end{proof}
\begin{ex}
Let $A=\operatorname{diag}(\lambda_1,\ldots,\lambda_{60})$, where the
$\lambda_i$ are sampled independently and uniformly by area from the unit
disk. We evolve $32$ independent Haar-distributed unit vectors by retracted
Euler steps of size $h=0.05$, without filtering or early stopping.
Figure~\ref{fig:paper-random-normal} shows the initial Rayleigh points,
their subsequent paths, and the decrease of the residuals.
At $t=5000$, the largest residual norm among these trajectories is
$\PaperNormalMaxResidual$; every final vector has more than $99.99\%$ of
its squared norm in a single eigendirection.
\end{ex}

\begin{figure}[H]
  \centering
  \includegraphics[width=\linewidth]{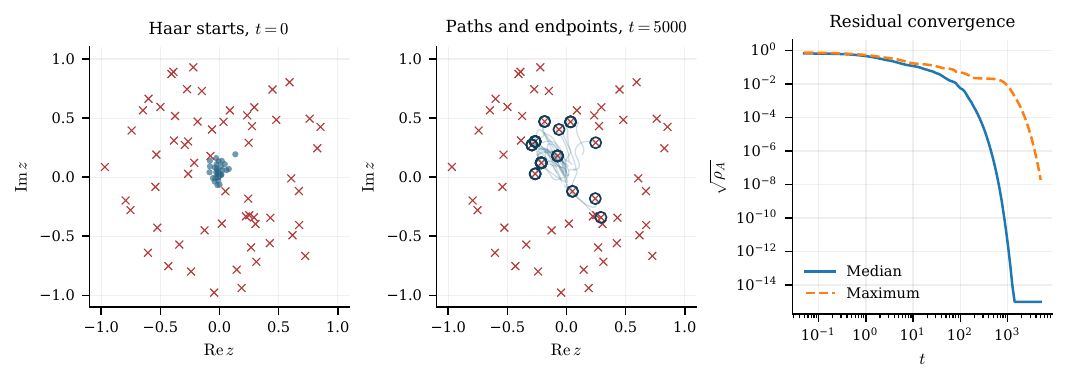}
  \caption{Residual flow for a normal matrix with $60$ independently sampled
  disk eigenvalues. Left: the $32$ initial Haar Rayleigh points. Middle:
  their paths and final points at $t=5000$, shown as open circles; red crosses
  indicate eigenvalues. Right: median and maximum residual norms. The
  trajectories use the same unfiltered retracted Euler flow throughout.}
  \label{fig:paper-random-normal}
\end{figure}

\begin{ex}

For the Jordan block
\[
  J=\begin{pmatrix}0&1\\0&0\end{pmatrix},
\]
write $p=|u_2|^2$. Direct calculation gives
\[
  z_J(u)=\overline{u_1}u_2,\qquad
  |z_J(u)|^2=p(1-p),\qquad \rho_J(u)=p^2.
\]
With the gradient convention used here, the exact flow satisfies
\[
  \dot p=-8p^2(1-p).
\]
For every $0<p(0)<1$, the variable $p(t)$ decreases to zero and $[u(t)]$
converges to the eigenline $[e_1]$. If $p(0)>1/2$, however, $|z_J(u(t))|$
first increases until $p=1/2$, then decreases to zero. Hence the distance
of the Rayleigh point to the spectrum need not decrease monotonically, even
though the residual does. The exceptional point $[e_2]$, with $p=1$, is
a positive-residual critical point and is stationary.
\end{ex}
Figure~\ref{fig:paper-jordan-return} shows both the excursion and the return.
For the numerical run, we use $600$ Haar starts and four prescribed starts
with $p(0)=0.1,0.9,0.99,0.999$, with Euler step $h=0.005$.
At $t=10^3$, all plotted trajectories satisfy
$|z_J|<1.2\times10^{-2}$ and
$\sqrt{\rho_J}<1.3\times10^{-4}$.
The return is algebraic: the exact scalar equation gives
$p(t)\sim(8t)^{-1}$ and $|z_J(u(t))|\sim(8t)^{-1/2}$.

\begin{figure}[H]
  \centering
  \includegraphics[width=\linewidth]{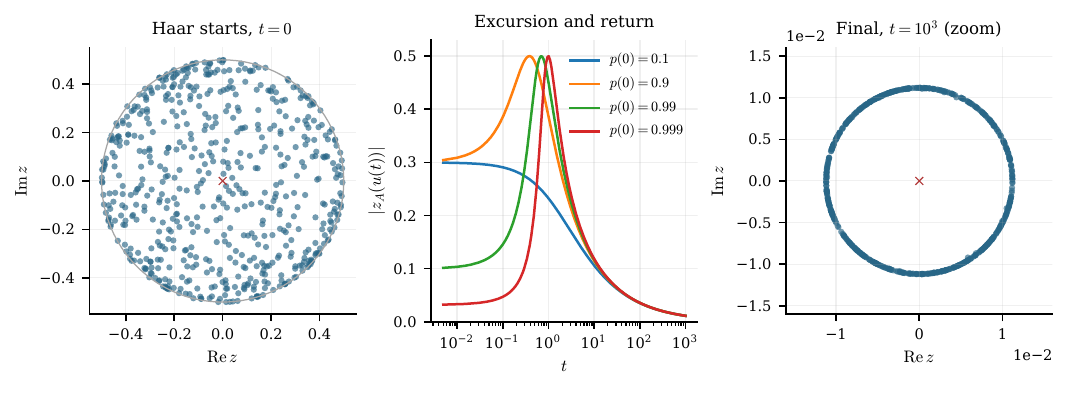}
  \caption{Excursion and return for the $2\times2$ Jordan block. Left:
  Rayleigh points of $600$ Haar starts, with the boundary of the numerical
  range shown in grey. Middle: distances to the unique eigenvalue $0$ for
  four separately prescribed initial weights. The three starts with
  $p(0)>1/2$ move outward before returning. Right: the Haar cloud at
  $t=10^3$, zoomed in.}
  \label{fig:paper-jordan-return}
\end{figure}

\section{Qualitative properties of the flow for normal matrices}\label{sec:qual_prop}
From now on we assume that $A \in M_n$ is normal. In this section, we describe explicitly the critical set of $\rho_A$. We show that in this case the residual landscape
is controlled by the elementary geometry of the eigenvalues in the complex
plane.
Since $A$ is normal and the properties of the flow are unitarily equivariant by Proposition~\ref{prop:basic_prop}, after a unitary change of basis we may assume
\[
    A=\operatorname{diag}(\lambda_1,\ldots,\lambda_n),
\]
 Denote, as before,
$$
    u=(u_1,\ldots,u_n)^T,
    \qquad
    p_i=|u_i|^2 \quad \text{and}
$$
$$
    p=(p_1,\ldots,p_n)\in\Delta^{n-1}
    :=
    \left\{p_i\geq 0,\ \sum_{i=1}^n p_i=1\right\},$$

% \begin{prop}
% Let $A=\operatorname{diag}(\lambda_1,\ldots,\lambda_n)$ and $z=\sum_{i=1}^n p_i\lambda_i$. Then
% $$
%     \rho_A(u)
%     =
%     \sum_{i=1}^n p_i|\lambda_i|^2-|z|^2
%     =
%     \frac12\sum_{i,j=1}^n p_ip_j|\lambda_i-\lambda_j|^2.
% $$
% In particular, $\rho_A$ depends only on the probability vector $p$, not on the phases of the coordinates of $u$.
% \end{prop}

% \begin{proof}
% The second equality is at hand from definition of $z$, so we need to prove only the first one. For diagonal $A$ we have
%     $Au-zu=((\lambda_1-z)u_1,\ldots,(\lambda_n-z)u_n)^T.$
% Therefore
% $$
%     \rho_A(u)=\|Au-zu\|^2=\sum_{i=1}^n |u_i|^2|\lambda_i-z|^2
%     =\sum_{i=1}^n p_i|\lambda_i-z|^2.
% $$
% Expanding gives
% $$
%     \sum_i p_i|\lambda_i-z|^2
%     =
%     \sum_i p_i\bigl(|\lambda_i|^2-2\operatorname{Re}(\lambda_i\overline z)+|z|^2\bigr)
%     =$$
    
%     $$
%     \sum_i p_i|\lambda_i|^2-2\operatorname{Re}\left(\overline z\sum_i p_i\lambda_i\right)+|z|^2\sum_i p_i
%     =
%     \sum_i p_i|\lambda_i|^2-|z|^2.
% $$
% \end{proof}
\begin{prop}\label{prop:normal-simplex-flow}
For the functional $\rho_A(u)$ the negative gradient flow on the probability simplex is
$$
    \boxed{
    \dot p_i=4p_i\left(\rho(p)-|\lambda_i-z(p)|^2\right).
    }
$$
\end{prop}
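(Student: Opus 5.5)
We will derive the simplex equation by pushing the flow $\dot u=-g_A(u)$ of Proposition~\ref{prop:gradient-flow} forward to $p$. The reduction is the coordinatewise identity $\dot p_i=2\Rea(\overline{u_i}\dot u_i)$.

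Take $A=\operatorname{diag}(\lambda_1,\ldots,\lambda_n)$. Then $z=\sum_j p_j\lambda_j$, and the residual has components $R_i=(\lambda_i-z)u_i$. Applying $A^*$ and subtracting $\overline z R$ gives
\[
    (A^*R-\overline zR)_i=(\overline{\lambda_i}-\overline z)(\lambda_i-z)u_i=|\lambda_i-z|^2u_i .
\]
Substituting into $g_A(u)=2(A^*R-\overline zR-\rho u)$ yields
\[
    (g_A(u))_i=2\bigl(|\lambda_i-z|^2-\rho\bigr)u_i ,
\]
where $\rho=\sum_j p_j|\lambda_j-z|^2$ is the weighted average from the proof of Proposition~\ref{prop:gradient-flow}.

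It follows that
\[
    \dot p_i=2\Rea\bigl(\overline{u_i}\cdot(-2)(|\lambda_i-z|^2-\rho)u_i\bigr)=4p_i\bigl(\rho-|\lambda_i-z|^2\bigr).
\]
The right-hand side depends only on $p$. The induced equation on $\Delta^{n-1}$ is therefore closed and is exactly the boxed formula.

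As a check, summing over $i$ gives $\sum_i\dot p_i=4(\rho-\rho)=0$, so the flow preserves the simplex and is consistent with $\|u\|=1$. It also preserves each face $\{p_i=0\}$.

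There is also a question of interpretation. If ``negative gradient flow on the simplex'' is meant intrinsically, the equation should coincide with the Shahshahani gradient of $\rho(p)=\sum_i p_i|\lambda_i|^2-|z|^2$. The partial derivatives are
\[
    \partial_{p_i}\rho=|\lambda_i|^2-2\Rea(\overline z\lambda_i)=|\lambda_i-z|^2-|z|^2 .
\]
Subtracting their $p$-weighted mean, which equals $\rho-|z|^2$, gives $|\lambda_i-z|^2-\rho$. Hence the replicator form $\dot p_i=-c\,p_i(\partial_i\rho-\sum_j p_j\partial_j\rho)$ reproduces the same vector field with $c=4$. This matches the Fubini--Study pushforward, since the moment map $u\mapsto p$ sends Fubini--Study to the Shahshahani metric up to a constant.

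The main obstacle is bookkeeping of that constant and sign, namely the factor $2$ in $g_A$ and the factor $2$ in $\dot p_i=2\Rea(\overline{u_i}\dot u_i)$. I will fix these by following Proposition~\ref{prop:gradient-flow}'s normalization exactly.
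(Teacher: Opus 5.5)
Your proposal is correct and follows essentially the same route as the paper. You specialize the gradient $g_A$ of Proposition~\ref{prop:gradient-flow} to diagonal $A$, which gives $\dot u_i=2(\rho-|\lambda_i-z|^2)u_i$, and then you use $\dot p_i=2\Rea(\overline{u_i}\dot u_i)$. Your additional Shahshahani check also agrees with the paper's later replicator remark: the $p$-part of the round metric is $\tfrac14\sum_i dp_i^2/p_i$, and this accounts for the constant $4$.
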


\begin{proof}
For diagonal $A$ we have
\[
    (A^*-\overline zI)(A-zI)u
    =
    \bigl(|\lambda_1-z|^2u_1,\ldots,|\lambda_n-z|^2u_n\bigr)^T.
\]
Therefore
\[
    \dot u_i
    =
    -2\left(|\lambda_i-z|^2-\rho\right)u_i
    =
    2\left(\rho-|\lambda_i-z|^2\right)u_i.
\]
 Also,
\[
    \dot p_i
    =
    \frac{d}{dt}|u_i|^2
    =
    2\Rea(\overline{u_i}\dot u_i)
    =
    4p_i\left(\rho-|\lambda_i-z|^2\right).
\]
This proves the formula.
\end{proof}
The amplitude dynamics of the normal residual flow belongs to the classical family of replicator equations. We briefly record this identification because, in the present Euclidean-distance case, the payoff matrix has very low rank and the flow acquires a large family of explicit first integrals. For the general theory of replicator dynamics we refer to Taylor--Jonker~\cite{TaylorJonker}, Shahshahani~\cite{Shahshahani}, and the monographs of Hofbauer--Sigmund~\cite{HofbauerSigmund} and Sandholm~\cite{Sandholm}. In particular, replicator dynamics for a potential game is the gradient flow of its potential with respect to the Shahshahani metric; see~\cite{Shahshahani,Sandholm}.

Put
$$
d_i=|\lambda_i-z|^2,\qquad
D_{ij}=|\lambda_i-\lambda_j|^2,
\qquad
B=-D.
$$
A direct calculation gives
$$
(Dp)_i=d_i+\rho,
\qquad
p^TDp=2\rho.
$$
Hence the residual flow can be written as
$$
\dot p_i
=4p_i\bigl((Bp)_i-p^TBp\bigr).
$$
Thus, up to the inessential factor $4$, it is the replicator equation of the symmetric matrix game with payoff matrix $B=-D$. Its potential is
$$
\frac12p^TBp=-\rho,
$$
so the decrease of the residual is the usual potential monotonicity of a symmetric replicator system, specialized to the squared Euclidean distance matrix of the spectral points.

\begin{prop}\label{prop:first-integrals}
Assume that the four real vectors
$$
(1)_{i=1}^n,
\qquad
(\operatorname{Re}\lambda_i)_{i=1}^n,
\qquad
(\operatorname{Im}\lambda_i)_{i=1}^n,
\qquad
(|\lambda_i|^2)_{i=1}^n
$$
are linearly independent. If $c=(c_1,\ldots,c_n)\in\mathbb R^n$ satisfies
$$
\sum_i c_i=0,
\qquad
\sum_i c_i\lambda_i=0,
\qquad
\sum_i c_i|\lambda_i|^2=0,
$$
then, on the interior of the simplex,
$$
I_c(p)=\sum_i c_i\log p_i
$$
is constant along the residual flow. Equivalently,
$$
\prod_i p_i^{c_i}=\mathrm{const}.
$$
The space of such first integrals has dimension $n-4$.
\end{prop}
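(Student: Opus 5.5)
The plan is to differentiate $I_c$ along the flow using Proposition~\ref{prop:normal-simplex-flow}. On the interior of the simplex every $p_i>0$, so
\[
\frac{d}{dt}I_c(p)=\sum_i c_i\frac{\dot p_i}{p_i}=4\sum_i c_i\bigl(\rho(p)-|\lambda_i-z(p)|^2\bigr).
\]
Next I expand $|\lambda_i-z|^2=|\lambda_i|^2-2\operatorname{Re}(\lambda_i\overline z)+|z|^2$. This is a real linear combination of the four vectors $(1)$, $(\operatorname{Re}\lambda_i)$, $(\operatorname{Im}\lambda_i)$ and $(|\lambda_i|^2)$. The coefficients $\rho-|z|^2$, $2\operatorname{Re}z$, $2\operatorname{Im}z$ and $-1$ depend on $p$ but not on $i$.

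Pairing with $c$ therefore gives zero. The conditions $\sum_i c_i=0$ and $\sum_i c_i|\lambda_i|^2=0$ kill the constant and quadratic parts. Since $c$ is real, $\sum_i c_i\lambda_i=0$ means both $\sum_i c_i\operatorname{Re}\lambda_i=0$ and $\sum_i c_i\operatorname{Im}\lambda_i=0$. For the linear part I write
\[
\sum_i c_i\operatorname{Re}(\lambda_i\overline z)=\operatorname{Re}\Bigl(\overline z\sum_i c_i\lambda_i\Bigr)=0.
\]
Hence $\frac{d}{dt}I_c=0$.

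Exponentiating gives the multiplicative form $\prod_i p_i^{c_i}=\mathrm{const}$. The interior of the simplex is invariant under the flow, since $\dot p_i=p_i\cdot(\text{bounded})$ means no coordinate can reach zero in finite time. So the first integral is well defined for all time along interior trajectories.

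For the dimension count, the admissible $c$ form the orthogonal complement in $\mathbb R^n$ of the span of the four real vectors. By the linear independence hypothesis this span is $4$-dimensional, so the complement has dimension $n-4$. The map $c\mapsto I_c$ is injective as a map to functions on the interior: the functions $\log p_i$ are linearly independent modulo constants, because $\sum_i c_i\log p_i$ constant on an open set forces $c=0$.

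The only step needing care is this last point, checking that distinct $c$ give genuinely distinct first integrals rather than merely distinct coefficient vectors. Everything else is a routine computation.
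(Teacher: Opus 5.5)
Your proof is correct and follows the paper's argument: you differentiate $I_c$ using $\dot p_i/p_i=4(\rho-|\lambda_i-z|^2)$, expand $|\lambda_i-z|^2$ so that the constraints on $c$ make each term vanish, and count the $n-4$ solutions of four independent real linear equations. Your two additions are valid small details that the paper leaves implicit: the open simplex is invariant under the flow, and $c\mapsto I_c$ is injective.
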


\begin{proof}
Since
$$
\frac{\dot p_i}{p_i}=4(\rho-d_i),
$$
we obtain
$$
\frac{d}{dt}I_c(p)
=4\rho\sum_i c_i-4\sum_i c_i|\lambda_i-z|^2.
$$
Expanding the last term gives
$$
\sum_i c_i|\lambda_i-z|^2
=
\sum_i c_i|\lambda_i|^2
-2\operatorname{Re}\left(\overline z\sum_i c_i\lambda_i\right)
+|z|^2\sum_i c_i,
$$
which vanishes under the stated assumptions. Hence $\frac{d}{dt}I_c=0$.

The three displayed constraints consist of four independent real linear equations under the genericity assumption, so their solution space has dimension $n-4$.
\end{proof}

\begin{rem}\label{rem:edm-rank}
If
$$
M=
\begin{pmatrix}
1&\cdots&1\\
\operatorname{Re}\lambda_1&\cdots&\operatorname{Re}\lambda_n\\
\operatorname{Im}\lambda_1&\cdots&\operatorname{Im}\lambda_n\\
|\lambda_1|^2&\cdots&|\lambda_n|^2
\end{pmatrix},
$$
then $D=M^TSM$ with
$$
S=
\begin{pmatrix}
0&0&0&1\\
0&-2&0&0\\
0&0&-2&0\\
1&0&0&0
\end{pmatrix}.
$$
Consequently, under the hypothesis of Proposition~\ref{prop:first-integrals}, $\operatorname{rank}D=4$ and the coefficient vectors $c$ above are precisely $\ker D$. Thus the large number of first integrals is a direct consequence of the rank-four structure of a planar squared Euclidean distance matrix. If the spectrum is cocircular or collinear, the rank drops and additional logarithmic first integrals appear.
\end{rem}

The preceding integrals can be complemented by an exact representation of every interior trajectory. It makes explicit that, after fixing the $n-4$ constants from Proposition~\ref{prop:first-integrals}, the amplitude motion is only three-dimensional.

\begin{prop}\label{prop:exact-representation}
Let $p(t)$ be an interior trajectory and put
$$
W(t)=\int_0^t z(s)\,ds.
$$
Then
$$
p_i(t)=
\frac{
 p_i(0)\exp\left(-4t|\lambda_i|^2+8\operatorname{Re}(\overline{\lambda_i}W(t))\right)
}{
 \displaystyle\sum_j p_j(0)\exp\left(-4t|\lambda_j|^2+8\operatorname{Re}(\overline{\lambda_j}W(t))\right)
}.
$$
Equivalently, $W$ is the solution of the single complex nonautonomous equation
$$
\dot W=
\frac{
\displaystyle\sum_i \lambda_i p_i(0)
\exp\left(-4t|\lambda_i|^2+8\operatorname{Re}(\overline{\lambda_i}W)\right)
}{
\displaystyle\sum_i p_i(0)
\exp\left(-4t|\lambda_i|^2+8\operatorname{Re}(\overline{\lambda_i}W)\right)
},
\qquad
W(0)=0.
$$
Thus a generic trajectory in the $(n-1)$-dimensional simplex is contained in a three-real-dimensional exponential family parametrized by $t$, $\operatorname{Re}W$, and $\operatorname{Im}W$.
\end{prop}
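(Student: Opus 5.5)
We integrate the logarithmic derivative supplied by Proposition~\ref{prop:normal-simplex-flow}. On the interior of the simplex every $p_i(t)>0$, so
$$
\frac{d}{dt}\log p_i=4\rho-4|\lambda_i-z|^2 .
$$
We expand $|\lambda_i-z|^2=|\lambda_i|^2-2\operatorname{Re}(\overline{\lambda_i}z)+|z|^2$. The terms $4\rho$ and $-4|z|^2$ do not depend on $i$, so we collect them into a common scalar $\kappa(t)=4\rho-4|z|^2$. This gives
$$
\frac{d}{dt}\log p_i=\kappa(t)-4|\lambda_i|^2+8\operatorname{Re}(\overline{\lambda_i}z(t)).
$$
Integrating from $0$ to $t$ and using $\operatorname{Re}(\overline{\lambda_i}\int_0^t z)=\operatorname{Re}(\overline{\lambda_i}W(t))$, we get
$$
p_i(t)=p_i(0)\,e^{K(t)}\exp\bigl(-4t|\lambda_i|^2+8\operatorname{Re}(\overline{\lambda_i}W(t))\bigr),\qquad K(t)=\int_0^t\kappa.
$$
The factor $e^{K(t)}$ is independent of $i$. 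The constraint $\sum_i p_i(t)=1$ therefore determines it as the reciprocal of the sum in the denominator, which yields the displayed formula.

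For the equivalent ODE, we differentiate the definition of $W$ to get $\dot W=z(t)=\sum_i\lambda_ip_i(t)$. Substituting the representation just obtained turns this into the stated nonautonomous equation, with $W(0)=0$. Conversely, let $W$ solve this equation and define $p(t)$ by the formula. Then $p$ lies in the interior of the simplex and $z(p(t))=\dot W$. Reversing the computation above shows that $p$ satisfies the replicator system. By uniqueness for the smooth system of Proposition~\ref{prop:normal-simplex-flow}, $p$ coincides with the given trajectory, which justifies the word ``equivalently''. The right-hand side of the $W$-equation is smooth and bounded by $\max_i|\lambda_i|$, so its solution exists globally and is unique.

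The final assertion follows by reading off the formula. The map $(t,\operatorname{Re}W,\operatorname{Im}W)\mapsto p$ is an exponential family in the three real sufficient statistics $|\lambda_i|^2$, $\operatorname{Re}\lambda_i$, $\operatorname{Im}\lambda_i$, with base measure $p(0)$. It is therefore at most three-dimensional, and exactly three-dimensional under the genericity assumption of Proposition~\ref{prop:first-integrals}. This is consistent with the $n-4$ first integrals found there, since $(n-1)-(n-4)=3$.

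The only step needing care is the bookkeeping of the $i$-independent terms, in particular that $\rho$ and $|z|^2$ drop out after normalization. I expect no real obstacle beyond that. The one subtle point is restricting to interior trajectories, so that $\log p_i$ is defined; the flow preserves the interior, since each $\dot p_i$ is proportional to $p_i$.
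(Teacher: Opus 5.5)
Your proof is correct and takes the paper's approach: integrate $\frac{d}{dt}\log p_i$ from Proposition~\ref{prop:normal-simplex-flow}, absorb the $i$-independent term $4\rho-4|z|^2$ into a common factor fixed by $\sum_i p_i=1$, and substitute the result into $\dot W=z$. Your converse argument via uniqueness, which justifies ``equivalently'', and your remark that the family is exactly three-dimensional under genericity go slightly beyond what the paper writes, and both are accurate.
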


\begin{proof}
Expanding $d_i=|\lambda_i-z|^2$ gives
$$
\frac{d}{dt}\log p_i
=
-4|\lambda_i|^2
+8\operatorname{Re}(\overline{\lambda_i}z)
+\beta(t),
$$
where
$$
\beta(t)=4\rho-4|z|^2
$$
is independent of $i$. Integration yields
$$
p_i(t)=C(t)p_i(0)
\exp\left(-4t|\lambda_i|^2+8\operatorname{Re}(\overline{\lambda_i}W(t))\right).
$$
The normalization $\sum_i p_i(t)=1$ determines $C(t)$ and gives the first formula. Substituting it into $\dot W=z=\sum_i p_i\lambda_i$ gives the reduced equation.
\end{proof}
Theorem~\ref{thm:normal-critical-set} characterizes critical values of $\rho_A$. It gives a clear geometric interpretation: a critical value corresponds to a certain set of eigenvalues lying on a circle. Before we proceed, we fix notation.

\begin{definition}
For a subset $S\subset\{1,\ldots,n\}$, write
\[
    \Delta_S^\circ
    =
    \left\{p\in\Delta^{n-1}: p_i>0\text{ for }i\in S,\ p_i=0\text{ for }i\notin S\right\}.
\]
The set $S$ is called the active support of $p$. 
\end{definition}
\begin{thm} \label{thm:normal-critical-set}
Let \(A\in M_n(\mathbb C)\) be normal, and write its spectral decomposition as
\[
        A=\sum_{\alpha=1}^d \mu_\alpha P_\alpha,
\]
where \(\mu_1,\ldots,\mu_d\) are the distinct eigenvalues of \(A\), \(P_\alpha\)
is the orthogonal projection onto the eigenspace $E_\alpha=\ker(A-\mu_\alpha I)$, and \(m_\alpha=\dim E_\alpha\).

For a unit vector \(u\), put $u_\alpha=P_\alpha u, \,
        p_\alpha=\|u_\alpha\|^2$,
so that $\sum_{\alpha=1}^d p_\alpha=1$.
Then
\[
        z(u)=u^*Au=\sum_{\alpha=1}^d p_\alpha \mu_\alpha, \qquad
        \rho(u)=\sum_{\alpha=1}^d p_\alpha |\mu_\alpha-z(u)|^2.
\]
Let   $S=\{\alpha:p_\alpha>0\}$. Then \([u]\in\mathbb CP^{n-1}\) is a critical
point of \(\rho\) if and only if
\[
        |\mu_\alpha-z(u)|^2=\rho(u)
        \qquad
        \text{for every }\alpha\in S.
\]
Equivalently, the eigenvalues  $\{\mu_\alpha:\alpha\in S\} $
lie on the circle centered at $z(u)$ with radius $\sqrt{\rho(u)}$.

% Conversely, suppose that \(S\subset\{1,\ldots,d\}\) and that
% \(p=(p_\alpha)_{\alpha\in S}\) is a positive probability vector such that, with
% \[
%         c=\sum_{\alpha\in S}p_\alpha\mu_\alpha,
% \]
% one has
% \[
%         |\mu_\alpha-c|^2=R^2
%         \qquad
%         \text{for all }\alpha\in S
% \]
% for some \(R\geq 0\). Then every unit vector \(u\) satisfying
% \[
%         \|P_\alpha u\|^2=p_\alpha \quad (\alpha\in S),
%         \qquad
%         P_\beta u=0 \quad (\beta\notin S),
% \]
% defines a critical point of \(\rho_A\), with
% \[
%         z_A(u)=c,\qquad
%         \rho_A(u)=R^2.
% \]

Finally, for a fixed pair \((S,p)\), the corresponding critical set in the
unit sphere is
$$
        \widetilde C(S,p)
        =
        \left\{
        u\in S^{2n-1}:
        \|P_\alpha u\|^2=p_\alpha\ (\alpha\in S),
        \quad
        P_\beta u=0\ (\beta\notin S)
        \right\}.
$$
% It is naturally diffeomorphic to
% \[
%         \widetilde C(S,p)
%         \cong
%         \prod_{\alpha\in S} S^{2m_\alpha-1}.
% \]
% Its image in projective space is $       C(S,p)=\widetilde C(S,p)/S^1
%         \subset \mathbb CP^{n-1},
% $
% where \(S^1\) acts by simultaneous multiplication of all components by a
% common phase. In particular,
% \[
%         \dim_{\mathbb R} C(S,p)
%         =
%         2\sum_{\alpha\in S}m_\alpha-|S|-1.
% \]
\end{thm}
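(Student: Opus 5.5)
The plan is to reduce everything to the critical-point equation of Lemma~\ref{lem:crit_point_equ}, $B^*Bu=s^2u$ with $B=A-zI$ and $s^2=\rho(u)$, and then read it off in the spectral decomposition.

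\textbf{Step 1: the formulas for $z$ and $\rho$.} Since $A$ is normal, the projections $P_\alpha$ are orthogonal, mutually orthogonal, and sum to $I$. Hence $u=\sum_\alpha u_\alpha$ is an orthogonal decomposition and $Au=\sum_\alpha\mu_\alpha u_\alpha$. This gives
\[
z(u)=u^*Au=\sum_\alpha \mu_\alpha\|u_\alpha\|^2=\sum_\alpha p_\alpha\mu_\alpha.
\]
Similarly $Bu=\sum_\alpha(\mu_\alpha-z)u_\alpha$. By Pythagoras,
\[
\rho(u)=\|Bu\|^2=\sum_\alpha p_\alpha|\mu_\alpha-z|^2,
\]
which is the computation in Proposition~\ref{prop:gradient-flow}, grouped by eigenspaces.

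\textbf{Step 2: the criticality criterion.} Normality gives $B^*B=\sum_\alpha|\mu_\alpha-z|^2P_\alpha$, so
\[
B^*Bu=\sum_\alpha|\mu_\alpha-z|^2u_\alpha .
\]
By Lemma~\ref{lem:crit_point_equ}, $[u]$ is critical if and only if this equals $\rho(u)\,u=\sum_\alpha\rho(u)\,u_\alpha$. The vectors $u_\alpha$ lie in mutually orthogonal subspaces, so we can compare components in each $E_\alpha$. The condition becomes
\[
\bigl(|\mu_\alpha-z|^2-\rho\bigr)u_\alpha=0 \quad\text{for every }\alpha .
\]
For $\alpha\notin S$ we have $u_\alpha=0$, so this holds automatically. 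For $\alpha\in S$ we have $u_\alpha\neq0$, so it forces $|\mu_\alpha-z|^2=\rho$. This gives both directions of the equivalence at once. The circle interpretation is a restatement: the points $\mu_\alpha$, $\alpha\in S$, lie on the circle centered at $z$ of radius $\sqrt\rho$. When $\rho=0$ the circle degenerates to a point, $S$ is a single eigenvalue, and this is the eigenvector case.

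\textbf{Step 3: the critical set for fixed $(S,p)$.} Here I must show that $\widetilde C(S,p)$ consists exactly of the critical unit vectors with data $(S,p)$. The key observation is that criticality depends only on the weights $p_\alpha=\|P_\alpha u\|^2$: $z$, $\rho$ and $S$ are all functions of $p$ by Step 1. So if one such $u$ is critical, then every unit $u'$ with the same $\|P_\alpha u'\|^2$ for all $\alpha$ is critical. Conversely, every critical $u$ with those weights lies in $\widetilde C(S,p)$ by definition.

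The only point needing care, and what I expect to be the main (mild) obstacle, is the convention for $p$. In the definition of $\widetilde C(S,p)$, $p$ should be taken as a positive probability vector on $S$ satisfying the circle condition with $z=\sum_{\alpha\in S}p_\alpha\mu_\alpha$. With that convention, the conditions $P_\beta u=0$ for $\beta\notin S$ and $\|u\|=1$ are consistent, since $\sum_{\alpha\in S}p_\alpha=1$.
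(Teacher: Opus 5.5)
Your proof is correct and follows the paper's argument. You reduce to $B^*Bu=\rho(u)u$ via Lemma~\ref{lem:crit_point_equ}, use normality to write $B^*B=\sum_\alpha|\mu_\alpha-z|^2P_\alpha$, and compare components in the mutually orthogonal eigenspaces to get $(|\mu_\alpha-z|^2-\rho)u_\alpha=0$; the paper instead spends its last step identifying the fiber $\widetilde C(S,p)$ with $\prod_{\alpha\in S}S^{2m_\alpha-1}$, while your remark that criticality depends only on $p$ justifies the stated description of $\widetilde C(S,p)$ more directly.
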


\begin{proof}
Let $  z=z(u),\,\, B=A-zI.$
A point \([u]\in\mathbb CP^{n-1}\) is critical for \(\rho\) if and only if
$ B^*Bu=\rho(u)u. $
Since \(A\) is normal, the spectral decomposition gives
\[
        B^*B
        =
        \sum_{\alpha=1}^d |\mu_\alpha-z|^2 P_\alpha .
\]
Thus the critical equation is equivalent to
\[
        \sum_{\alpha=1}^d |\mu_\alpha-z|^2 u_\alpha
        =
        \rho_A(u)\sum_{\alpha=1}^d u_\alpha .
\]
Because the eigenspaces \(E_\alpha\) are mutually orthogonal, this holds if
and only if
\[
        \bigl(|\mu_\alpha-z|^2-\rho_A(u)\bigr)u_\alpha=0
        \qquad
        (\alpha=1,\ldots,d).
\]
For every active \(\alpha\), we have \(u_\alpha\neq 0\), and hence $
        |\mu_\alpha-z|^2=\rho_A(u)$.
This proves the critical point criterion.

It remains only to identify the fiber. For each active \(\alpha\), the
condition $
        \|P_\alpha u\|^2=p_\alpha $
says that \(u_\alpha\) lies on the sphere of radius \(\sqrt{p_\alpha}\) in the
complex vector space \(E_\alpha\). This sphere is diffeomorphic to
\(S^{2m_\alpha-1}\). The inactive components are zero. Hence
$$
        \widetilde C(S,p)
        \cong
        \prod_{\alpha\in S}S^{2m_\alpha-1}.
$$
\end{proof}

\begin{rem}
If \(A\) has simple spectrum, then \(d=n\) and \(m_\alpha=1\) for all
\(\alpha\). In this case the active eigenspaces are one-dimensional, and for
fixed admissible weights \(p_i\) the only remaining freedom is the choice of
phases of the active coordinates. Thus, if \(k=|S|\),
       $ \widetilde C(S,p)\cong T^k$
on the unit sphere, and $C(S,p)\cong T^{k-1}$
in projective space.

\end{rem}
We now discuss corollaries for a generic normal matrix. In particular, we assume that the spectrum is simple, and we investigate the two simplest generic configurations of the eigenvalues: two eigenvalues on a segment and three eigenvalues on a circle; see the proof of Theorem~\ref{thm:gen_crit_skel}.
\begin{definition}[General position]
We say that the eigenvalues are in general position if
\begin{enumerate}
    \item no three eigenvalues are collinear;
    \item no four eigenvalues are concyclic;
    \item no triangle formed by three eigenvalues is right-angled.
\end{enumerate}
\end{definition}

\begin{thm}~\label{thm:gen_crit_skel}
Assume $A$ is normal and has pairwise distinct eigenvalues in general position. Then the critical set of $\rho$ on $\mathbb{CP}^{n-1}$ consists exactly of the following components:

\begin{enumerate}
    \item $n$ isolated minima, namely the eigenlines
    \[
        [e_1],\ldots,[e_n];
    \]
    \item for every unordered pair $\{i,j\}$, one critical circle $T^1$ with
    \[
        |u_i|^2=|u_j|^2=\frac12,
        \qquad
        u_k=0\quad(k\neq i,j);
    \]
    thus there are exactly $\binom n2$ pair circles;
    \item for every unordered triple $\{i,j,k\}$ such that $\lambda_i,\lambda_j,\lambda_k$ form an acute triangle, one critical two-torus $T^2$, whose squared moduli are the barycentric coordinates of the circumcenter of the triangle.
\end{enumerate}
There are no other critical points.
\end{thm}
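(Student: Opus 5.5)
The plan is to deduce everything from Theorem~\ref{thm:normal-critical-set}, specialized to simple spectrum ($d=n$, $m_\alpha=1$). By that theorem, $[u]$ is critical if and only if, with active support $S$ and weights $p$, the points $\{\lambda_i:i\in S\}$ lie on the circle of radius $\sqrt{\rho}$ centered at $z=\sum_{i\in S}p_i\lambda_i$. Equivalently, $z$ is equidistant from all active eigenvalues and is a strictly positive convex combination of them. So we need the pairs $(S,p)$ with $p_i>0$ on $S$ for which $\sum_{i\in S}p_i\lambda_i$ is equidistant from $\{\lambda_i\}_{i\in S}$. We then read off the fibres from the Remark following Theorem~\ref{thm:normal-critical-set}: $C(S,p)\cong T^{|S|-1}$ in $\mathbb{CP}^{n-1}$. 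The proof is a case split on $k=|S|$.

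\emph{Case $k=1$.} Here $\rho=0$ and $u$ is an eigenline $[e_i]$. By Theorem~\ref{thm:no_pos_res_min} these are exactly the local minima. They are isolated because each is its own component ($T^0$ is a point) and the other components have $\rho>0$.

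\emph{Case $k=2$.} The point $z$ lies on the open segment $(\lambda_i,\lambda_j)$ and is equidistant from both ends, so it is the midpoint and $p_i=p_j=\tfrac12$. Conversely the midpoint works. This gives one circle per pair, $\binom n2$ in all, with no genericity needed.

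\emph{Case $k=3$.} By non-collinearity, the points equidistant from $\lambda_i,\lambda_j,\lambda_k$ reduce to the single circumcenter $O$. Barycentric coordinates with respect to a nondegenerate triangle are unique, so $p$ is the barycentric vector of $O$. We need all three coordinates strictly positive, i.e.\ $O$ in the open triangle. This holds exactly for acute triangles: $O$ lies on an edge iff the triangle is right-angled (excluded by general position), and outside iff it is obtuse. I would quote the standard formula for the barycentric coordinates of $O$, proportional to $\sin 2A:\sin 2B:\sin 2C$, or alternatively give the elementary argument via the inscribed-angle theorem.

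\emph{Case $k\ge4$.} Every critical point would need four concyclic eigenvalues, which general position forbids, so there are no such components.

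Finally, the components listed are pairwise distinct and disjoint, since they have different supports. Together with the case analysis this gives the claim that there are no other critical points.

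The only nontrivial step is the $k=3$ positivity criterion. I expect it to be routine once the barycentric formula for the circumcenter is invoked. The assumption excluding right triangles is exactly what is needed there: a right triangle would put $O$ on an edge, the support would drop to $k=2$, and the point would coincide with a pair-circle point rather than form a new torus.
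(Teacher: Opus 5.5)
Your proposal is correct and follows essentially the same route as the paper. Both proofs split into cases by the size of the active support using Theorem~\ref{thm:normal-critical-set}: the midpoint condition forces $p_i=p_j=\tfrac12$ for pairs, for triples the weights are the unique barycentric coordinates of the circumcenter and they are positive exactly when the triangle is acute, and the no-four-concyclic condition rules out $|S|\ge 4$. Your extra details (the $\sin 2A:\sin 2B:\sin 2C$ formula, the remark on right triangles, and the disjointness and isolation argument) only make explicit steps that the paper asserts without proof.
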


\begin{proof}
Let $[u]$ be critical and let $S$ be its active support.

If $|S|=1$, then $[u]$ is one of the eigenlines. These give the $n$ isolated minima.

If $|S|=2$, using Theorem~\ref{thm:normal-critical-set} we immediately obtain that for every unordered pair $\{i,j\}$, there is exactly one critical probability vector with support $\{i,j\}$, namely
\[
    p_i=p_j=\frac12.
\]
The corresponding critical component in $\mathbb{CP}^{n-1}$ is a circle $T^1$, given by
\[
    |u_i|^2=|u_j|^2=\frac12,
    \qquad
    u_k=0\quad(k\neq i,j),
\]
modulo global phase. Its critical value is
\[
    z=\frac{\lambda_i+\lambda_j}{2},
    \qquad
    \rho_A=\frac{|\lambda_i-\lambda_j|^2}{4}.
\]
On the unit sphere, its preimage is a two-torus $T^2$.

If $|S|=3$, the general-position assumption excludes collinearity. Therefore the three eigenvalues form a nondegenerate triangle. Critical points with active support $S$ exist if and only if the circumcenter $c$ of the triangle with vertices
\[
    \lambda_i,
    \lambda_j,
    \lambda_k
\]
lies in the interior of that triangle. Equivalently, the triangle is acute.

When this happens, the critical probability vector is unique: it is the vector of barycentric coordinates of the circumcenter $c$ with respect to the triangle. The corresponding critical component in $\mathbb{CP}^{n-1}$ is a two-torus $T^2$. Its critical value is
\[
    z=c,
    \qquad
    \rho_A=R^2,
\]
where $R$ is the circumradius. On the unit sphere, its preimage is $T^3$. 

The exclusion of four cocircular eigenvalues rules out active supports
with four or more indices, so these cases exhaust the critical set.
\end{proof}

\section{Escape time for critical sets}\label{sec:quan_prop}

A positive critical component is not an attracting local minimum. However, a trajectory may spend a long finite time near it if the initial point is very close to its stable manifold. Here we give the classical Gronwall-style estimates for a hyperbolic unstable coordinate, see Theorem~\ref{thm:local-trapping}. For clarity, in this section we treat only the case of simple spectrum, though generalization to multiple eigenvalues is at hand. In the normal case the escape exponent is explicit: it is the sum of the positive linear escape rates, and these rates are determined by elementary geometry of the eigenvalues. If Theorem~\ref{thm:gen_crit_skel} showed that a two-dimensional critical component corresponds to an acute triangle formed of some eigenvalues, here we show that it also matters \textit{how exactly} this triangle is acute.

Before proceeding, we recall two standard notions from dynamical systems.
\begin{definition}
Let $\Phi_t$ denote the residual flow. The stable manifold of a critical component $C$ is
\[
    W^s(C)=\{x:\dist(\Phi_t(x),C)\to0\text{ as }t\to+\infty\}.
\]
Locally, for a saddle component, $W^s(C)$ is obtained by setting all unstable coordinates equal to zero.

The unstable dimension $m(C)$ is the number of real transverse directions in which the negative gradient flow moves away from $C$. Equivalently, it is the number of positive eigenvalues of the linearized flow transverse to $C$, counted with multiplicity.
\end{definition}
% All constants below correspond to the following normalization of the negative residual-gradient flow:
% \[
%     \dot u=-2\left((A^*-\overline z I)(A-zI)u-\rho u\right),
%     \qquad
%     z=u^*Au,
%     \qquad
%     \rho=\rho_A(u).
% \]

As before, let
\[
    A=\diag(\lambda_1,\ldots,\lambda_n),
\]
where the eigenvalues $\lambda_i\in\mathbb C$ are pairwise distinct. 

\begin{definition}
    For simple spectrum, a projective critical component $C(p^*)$ with fixed spectral weights $p^*$, as in Theorem~\ref{thm:normal-critical-set}, is called a critical torus.
\end{definition}

Fix a critical torus $C=C(p^*)$. Write
\[
    c=z(p^*),
    \qquad
    R^2=\rho(p^*),
    \qquad
    q_i=\lambda_i-c.
\]
For active indices $i\in S$ we have
\[
    |q_i|=R,
    \qquad
    \sum_{i\in S}p_i^*q_i=0.
\]

There are two kinds of transverse directions: active-support directions and inactive-coordinate directions.
First, we investigate inactive-coordinate directions. \begin{prop}\label{prop:lin_inact} The eigenvalues that lie strictly inside the critical circle, correspond to unstable directions, while those who lie strictly outside the circle correspond to stable directions. Namely,
$$
    |\lambda_k-c|<R
    \quad\Longleftrightarrow\quad
    \text{the inactive coordinate }k\text{ is unstable},
$$
$$
    |\lambda_k-c|>R
    \quad\Longleftrightarrow\quad
    \text{the inactive coordinate }k\text{ is stable}.
$$
\end{prop}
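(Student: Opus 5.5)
We linearize the amplitude flow of Proposition~\ref{prop:normal-simplex-flow} at a point of the critical torus $C=C(p^*)$ in the direction of a single inactive coordinate $k\notin S$. The key observation is that the replicator form makes the face $\{p_k=0\}$ invariant, and the growth rate of $p_k$ is read off directly. For $p$ near $p^*$, Proposition~\ref{prop:normal-simplex-flow} gives
\[
    \frac{\dot p_k}{p_k}=4\bigl(\rho(p)-|\lambda_k-z(p)|^2\bigr).
\]
At $p=p^*$ we have $\rho(p^*)=R^2$ and $z(p^*)=c$. The transverse linearization in the $p_k$ direction is therefore the scalar equation
\[
    \dot p_k=4\bigl(R^2-|\lambda_k-c|^2\bigr)p_k+O(p_k\,\|p-p^*\|).
\]
The corresponding Lyapunov exponent is $\sigma_k=4(R^2-|\lambda_k-c|^2)$.

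In the complex coordinate $u_k$, the formula from the proof of Proposition~\ref{prop:normal-simplex-flow} gives
\[
    \dot u_k=2(\rho-|\lambda_k-z|^2)u_k.
\]
So the two real directions $\operatorname{Re}u_k$ and $\operatorname{Im}u_k$ both have linear rate $2(R^2-|\lambda_k-c|^2)$. Since $p_k=|u_k|^2$, this is consistent with the rate $4(R^2-|\lambda_k-c|^2)$ found for $p_k$.

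Next we justify that this scalar is a genuine eigenvalue of the full linearization transverse to $C$. Write the vector field in the coordinates $(p_S,\text{phases},u_{k'})_{k'\notin S}$. Each inactive equation has the form $\dot u_k=u_k\,f_k(u)$, so its derivative with respect to any other variable vanishes when $u_k=0$. The Jacobian at $C$ is then block triangular. The inactive block is diagonal, with entries $2(R^2-|\lambda_k-c|^2)$ repeated twice for each $k$. The remaining block is the active-support block, which is handled separately. Hence the sign of $R^2-|\lambda_k-c|^2$ decides the stability of coordinate $k$:
\begin{itemize}
    \item if $|\lambda_k-c|<R$, the exponent is positive and the direction is unstable;
    \item if $|\lambda_k-c|>R$, it is negative and the direction is stable.
\end{itemize}
The converse implications follow from trichotomy, once we note that $|\lambda_k-c|=R$ gives a zero exponent. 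Such an eigenvalue is neither stable nor unstable in the hyperbolic sense. In the generic setting of Theorem~\ref{thm:gen_crit_skel} this case would make a larger cocircular set, and it is excluded.

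The main point needing care is the triangularity argument. We must check that the coupling of $u_k$ into the active variables through $z$ and $\rho$ enters only at order $|u_k|^2$. Both $z$ and $\rho$ depend on $u_k$ only through $p_k=|u_k|^2$, so they are quadratic in $u_k$. Their derivatives in $u_k$ therefore vanish at $u_k=0$, and the off-diagonal blocks in the active rows are zero. Once this is verified, the proposition follows.
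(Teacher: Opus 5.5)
Your proposal is correct and follows essentially the same route as the paper: at $C$, the real coefficient in $\dot u_k=2(\rho-|\lambda_k-z|^2)u_k$ gives the common rate $2(R^2-|\lambda_k-c|^2)$ for both real directions of $u_k$ (equivalently $4(R^2-|\lambda_k-c|^2)$ for $p_k$), and its sign decides stability. You also make explicit two points the paper's proof leaves implicit: the Jacobian check (the inactive rows alone already give block triangularity, and since $z$ and $\rho$ depend on $u_k$ only through $|u_k|^2$ the linearization is in fact block diagonal, so the $u_k$-directions are themselves eigendirections) and the trichotomy argument for the converse implications.
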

\begin{proof}
    Let $k\notin S$. Near $C$, the coordinate $u_k$ itself is a local complex transverse coordinate. Since the coefficient in the equation for $u_k$ is real,
\[
    \dot u_k
    =
    2\left(\rho-|\lambda_k-z|^2\right)u_k.
\]
At the critical torus this becomes
\[
    \dot u_k
    =
    2\left(R^2-|\lambda_k-c|^2\right)u_k
    +\text{higher order terms}.
\]
Therefore the two real directions corresponding to the complex coordinate $u_k$ have the same linear rate
\[
    \alpha_k=2\left(R^2-|\lambda_k-c|^2\right).
\]
Equivalently,
\[
    \dot p_k
    =
    4\left(R^2-|\lambda_k-c|^2\right)p_k
    +\text{higher order terms}.
\]
Thus, the behavior of the flow is determined by the sign of $R^2-|\lambda_k-c|^2$, and the result follows.
\end{proof}
An active-support perturbation changes the weights $p_i$, $i\in S$, while keeping all inactive coordinates zero. Let
\[
    V_S=\left\{a=(a_i)_{i\in S}\in\mathbb R^S:\sum_{i\in S}a_i=0\right\} \,\,\, \text{and} \,\,\,
    \delta z=\sum_{i\in S}a_i\lambda_i=\sum_{i\in S}a_iq_i,
\]
and write $
    p_i=p_i^*+a_i, \,\, \text{where} \,\,
    \sum_{i\in S}a_i=0 $. Here $V_S$ is the space of variations in \textit{active} variables. 

\begin{prop}\label{prop:lin_act}
The linearized flow in the active variables is
\[
    \boxed{
    (L_Sa)_i
    =
    8p_i^*\,\Rea\left(q_i\overline{\delta z}\right),
    \quad
    i\in S.
    }
\]
Equivalently, if one identifies the complex plane with $\mathbb R^2$ and writes $q_i\cdot y=\Rea(q_i\overline y)$, then
\[
    (L_Sa)_i=8p_i^*(q_i\cdot \delta z).
\]
\end{prop}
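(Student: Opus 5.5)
The plan is to linearize the replicator form of the amplitude dynamics from Proposition~\ref{prop:normal-simplex-flow},
\[
\dot p_i=4p_i\bigl(\rho(p)-d_i(p)\bigr),\qquad d_i=|\lambda_i-z(p)|^2,
\]
at $p^*$, restricting to perturbations $p=p^*+a$ with $a\in V_S$ and $p_k=0$ for $k\notin S$. Since inactive coordinates are zero, the face $\Delta_S^\circ$ is invariant. The linearization there is $(L_Sa)_i=4p_i^*\bigl(\delta\rho-\delta d_i\bigr)+4a_i\bigl(\rho(p^*)-d_i(p^*)\bigr)$. By Theorem~\ref{thm:normal-critical-set}, $d_i(p^*)=R^2=\rho(p^*)$ for every $i\in S$, so the second term vanishes identically. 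It remains to compute the first-order variations $\delta\rho$ and $\delta d_i$.

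First, $z(p)=\sum_i p_i\lambda_i$ is linear, so $\delta z=\sum_{i\in S}a_i\lambda_i$. Because $\sum a_i=0$, this equals $\sum_{i\in S} a_iq_i$. Next, $d_i=|q_i-(z-c)|^2$, hence
\[
\delta d_i=-2\Rea\bigl(q_i\overline{\delta z}\bigr).
\]
For $\rho$, I would use $\rho(p)=\sum_j p_j d_j(p)$, which gives
\[
\delta\rho=\sum_{j\in S}a_j d_j(p^*)+\sum_{j\in S}p_j^*\,\delta d_j .
\]
The first sum is $R^2\sum_j a_j=0$. The second is $-2\Rea\bigl(\sum_j p_j^*q_j\,\overline{\delta z}\bigr)=0$, using the critical-point identity $\sum_{j\in S}p_j^*q_j=0$. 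Alternatively, this is immediate because $p^*$ is critical for $\rho$ restricted to the face, so $\delta\rho=0$.

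Substituting these variations yields $(L_Sa)_i=4p_i^*\cdot 2\Rea(q_i\overline{\delta z})=8p_i^*\Rea(q_i\overline{\delta z})$. The dot-product reformulation is just the identity $\Rea(w\bar y)=w\cdot y$ under $\mathbb C\cong\mathbb R^2$. As a consistency check, $\sum_i (L_Sa)_i=8\Rea\bigl(\sum_i p_i^*q_i\,\overline{\delta z}\bigr)=0$, so $L_S$ maps $V_S$ into itself, as it must.

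The only delicate point is making sure every zeroth-order coefficient cancels. One must use both critical-point relations, namely $d_i=R^2$ on $S$ and $\sum p_i^*q_i=0$, together with $\sum a_i=0$. One should also note that the quadratic terms are genuinely higher order: every term is a product of $a$, $\delta z$, or their squares. I expect no real obstacle beyond this bookkeeping.
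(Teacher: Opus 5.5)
Your proposal is correct and follows the paper's proof essentially step for step: you linearize $F_i=4p_i(\rho-d_i)$ at $p^*$, use $d_i(p^*)=\rho(p^*)=R^2$ on $S$, compute $\delta d_i=-2\Rea(q_i\overline{\delta z})$, and show $\delta\rho=0$ from $\sum_{i\in S}a_i=0$ and $\sum_{i\in S}p_i^*q_i=0$. Your explicit handling of the product-rule term $4a_i(\rho-d_i)$ and your check that $L_S$ maps $V_S$ into itself are small additions that the paper leaves implicit.
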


\begin{proof}
Let $d_i(p)=|\lambda_i-z(p)|^2$, then
the simplex vector field is
$$
    F_i(p)=4p_i(\rho(p)-d_i(p)).
$$
At the critical point $p^*$, for $i\in S$,
\[
    \rho(p^*)=d_i(p^*)=R^2.
\]
Therefore the linearization in active directions is
\[
    \delta F_i=4p_i^*(\delta\rho-\delta d_i).
\]
Now
\[
    \delta d_i
    =
    \delta |q_i-\delta z|^2\big|_{\delta z=0}
    =
    -2\Rea(q_i\overline{\delta z}).
\]
Also,
\[
\begin{aligned}
    \delta\rho
    &=
    \delta\left(\sum_{i\in S}p_i|\lambda_i-z|^2\right) =
    \sum_{i\in S}a_iR^2+
    \sum_{i\in S}p_i^*\delta d_i.
\end{aligned}
\]
The first term vanishes because $\sum_{i\in S}a_i=0$. The second term is
\[
    \sum_{i\in S}p_i^*(-2\Rea(q_i\overline{\delta z}))
    =
    -2\Rea\left(\left(\sum_{i\in S}p_i^*q_i\right)\overline{\delta z}\right)
    =0,
\]
because $\sum_{i\in S}p_i^*q_i=0$. Hence $\delta\rho=0$, and so
\[
    \delta F_i
    =
    -4p_i^*\delta d_i
    =
    8p_i^*\Rea(q_i\overline{\delta z}).
\]
This proves the formula.
\end{proof}
\begin{rem}

In fact, the active rates  are characterized by the configuration of the eigenvalues on the plane. Define the real symmetric positive semidefinite matrix
\[
    M_S=\sum_{i\in S}p_i^* q_iq_i^T,
\]
where $q_i$ is regarded as a vector in $\mathbb R^2$. Then
\[
    \delta\dot z
    =
    \sum_{i\in S}(L_Sa)_iq_i
    =
    8M_S\delta z.
\]
Thus the nonzero active escape rates are precisely the nonzero eigenvalues of $8M_S$.
\end{rem}

We assume below that no inactive eigenvalue lies on the critical circle:
\[
    |\lambda_k-c|\neq R
    \qquad(k\notin S).
\]
This excludes extra zero directions.

For the critical torus $C=C(p^*)$, define the active positive rates
\[
    \gamma_1^{\rm act},\ldots,\gamma_r^{\rm act}>0
\]
to be the positive eigenvalues of $L_S$ on $V_S$, ignoring zero directions tangent to a possible larger critical family. 

Let
\[
    I_{\rm in}(C)=\{k\notin S: |\lambda_k-c|<R\}
\]
be the set of inactive eigenvalues lying strictly inside the critical circle.

Define
\[
    \boxed{
    m(C)=r+2|I_{\rm in}(C)|
    }
\]
and
\[
    \boxed{
    \Gamma(C)=
    \sum_{\ell=1}^r\gamma_\ell^{\rm act}
    +
    4\sum_{k\in I_{\rm in}(C)}
    \left(R^2-|\lambda_k-c|^2\right).
    }
\]
The number $m(C)$ is the unstable dimension. The number $\Gamma(C)$ is the local escape exponent.

\begin{rem}
In general position, a pair component has unstable dimension
$1+2|I_{\rm in}(C)|$, so its dimension is minimal precisely when its
diametral disk contains no other eigenvalue. This is the empty-disk test
defining a Gabriel edge; see~\cite{GabrielSokal}.
For an acute triple component the unstable dimension is
$2+2|I_{\rm in}(C)|$. It is minimal when the circumdisk is empty, that is,
when the triple is a centered Delaunay triangle;
see~\cite{EdelsbrunnerMesh,BauerEdelsbrunner}.
These are local index tests. Identifying global basin adjacencies would
require additional information about the stable manifolds.
\end{rem}

\begin{thm}\label{thm:local-trapping}
Assume that $C=C(p^*)$ is a Morse--Bott critical torus for the residual flow, that is, the kernel of the Hessian is exactly tangent to $C$. Assume also that no inactive eigenvalue lies on its critical circle:
\[
    |\lambda_k-c|\neq R
    \qquad(k\notin S).
\]
Let $\nu$ denote Fubini-Study volume on $\mathbb{CP}^{n-1}$. There exists a sufficiently small adapted tubular neighborhood $U$ of $C$ such that, if
\[
    E_T(U,C)=\{x\in U:\Phi_t(x)\in U\text{ for all }0\leq t\leq T\},
\]
then there are constants $K_1,K_2>0$ and $T_0>0$ such that for all $T\geq T_0$,
\[
    K_1e^{-\Gamma(C)T}
    \leq
    \nu(E_T(U,C))
    \leq
    K_2e^{-\Gamma(C)T}.
\]
In particular,
\[
    \boxed{
    \lim_{T\to\infty}\frac1T\log \nu(E_T(U,C))=-\Gamma(C).
    }
\]
Thus finite-time trapping near $C$ decays exponentially, and the exponent is the sum of all positive local escape rates.
\end{thm}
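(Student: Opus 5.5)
We will prove the two-sided volume bound by straightening the flow near $C$ and then counting how long points survive in the unstable directions. The first step is to pick coordinates adapted to $C$. Near $C$ we use the tangential phase coordinates $\theta\in T^{|S|-1}$ and the transverse coordinates. The transverse coordinates consist of the active weight variations $a\in V_S$ and the inactive complex coordinates $u_k$, $k\notin S$. By Propositions~\ref{prop:lin_inact} and~\ref{prop:lin_act}, the linearization transverse to $C$ is hyperbolic; the Morse--Bott hypothesis together with $|\lambda_k-c|\neq R$ excludes zero transverse eigenvalues.

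The unstable rates are as follows. There are the $r$ positive eigenvalues $\gamma_\ell^{\rm act}$ of $L_S$. Each $k\in I_{\rm in}(C)$ contributes two real directions with rate $\alpha_k=2(R^2-|\lambda_k-c|^2)$. Hence the total unstable rate is exactly $\Gamma(C)$, and the unstable dimension is $m(C)$. The flow is equivariant under the torus of coordinate phases, so the linearization does not depend on the base point of $C$. We can therefore invoke a parametrized Hartman--Grobman/Morse--Bott normal form, or more concretely the invariant-manifold theorem for normally hyperbolic manifolds. This yields a $C^1$ tubular neighborhood $U\cong C\times D^s\times D^u$ with local stable and unstable foliations. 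In these coordinates the unstable components satisfy $|\dot x_u - \Lambda_u x_u|\le \varepsilon|x|$, where $\Lambda_u$ has spectrum $\{\gamma_\ell^{\rm act}\}\cup\{\alpha_k,\alpha_k\}$.

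The second step is the core of the estimate. We work with a box neighborhood $U=\{|x_s|<\delta,\ |y_j|<\delta\}$, where $y_j$ are coordinates diagonalizing $\Lambda_u$. In an exactly linear model, a point stays in $U$ up to time $T$ iff $|y_j(0)|<\delta e^{-\mu_jT}$ for each unstable eigenvalue $\mu_j$, while the stable coordinates only shrink. The volume of $E_T$ is therefore comparable to $\mathrm{vol}(C)\,\delta^{\dim}\prod_j e^{-\mu_jT}=K e^{-\Gamma(C)T}$. The Fubini--Study density is bounded above and below on $U$, so it affects only the constants.

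To pass to the true nonlinear flow, we would first use $C^1$ linearization rather than only $C^0$. Then the Jacobian of the conjugacy is bounded above and below, and the volume comparison survives with modified constants. A $C^0$ conjugacy alone would not control Lebesgue measure. Our fallback is a direct cone/Gronwall argument. Write $\dot y_j=\mu_j y_j+O(|x|^2)$ on the local stable-straightened chart. Inside $E_T$ the stable part decays exponentially, so the quadratic errors are integrable in time. Hence along surviving orbits $\log|y_j(t)|-\mu_j t-\log|y_j(0)|$ stays bounded by a constant independent of $T$. This sandwiches $E_T$ between two linear boxes $\{|y_j(0)|\le c_\pm\delta e^{-\mu_jT}\}$.

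The third step is to handle the complex inactive pairs and the active block. For $k\in I_{\rm in}$ this is easy: $\dot u_k=2(\rho-|\lambda_k-z|^2)u_k$ has a real coefficient, so $|u_k|$ grows exactly at the rate $\alpha_k+O(\mathrm{dist})$. The area condition $|u_k(0)|\lesssim e^{-\alpha_kT}$ gives the factor $e^{-2\alpha_kT}=e^{-4(R^2-|\lambda_k-c|^2)T}$, which matches the second sum in $\Gamma(C)$. For the active part, $L_S$ is self-adjoint with respect to the Shahshahani metric weighted by $1/p_i^*$, since the flow is a replicator gradient flow. It is therefore diagonalizable with real spectrum, and the box argument applies along its eigenvectors. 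Taking logarithms then gives the limit formula. We expect the main obstacle to be the uniform nonlinear control in the second step: the error terms must be shown not to accumulate over the long time $T$ when there are several distinct unstable rates. We would handle this with the integrable-error Gronwall bound, using exponential decay of the stable coordinates and monotonicity of the unstable ones along $E_T$.
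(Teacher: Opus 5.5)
Your overall plan matches the paper's proof. That proof is exactly the linear box count: each unstable coordinate must start at size $e^{-\alpha_aT}$, the tangential and stable coordinates give $T$-independent factors, and the rates come from Propositions~\ref{prop:lin_inact} and~\ref{prop:lin_act}. The nonlinear step is left to ``the usual Gronwall estimate''. Your bookkeeping of the rates is right: a factor $e^{-2\alpha_kT}$ per complex $u_k$, and $L_S$ real diagonalizable.

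The gap is the passage to the nonlinear flow. The paper does not supply it, and neither of your two routes closes it.

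\textbf{The $C^1$ linearization.} It is not automatic. In dimension $\ge3$ it can fail when stable and unstable rates are resonant, and nothing in the hypotheses excludes resonances.

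\textbf{The fallback is false as stated.} The quantity $\log|y_j(t)|-\mu_jt-\log|y_j(0)|$ need not stay bounded on $E_T$. Consequently $E_T$ is not sandwiched between boxes $\{|y_j(0)|\le c_\pm\delta e^{-\mu_jT}\}$. The reason is that $z$ and $\rho$ depend on all weights, so distinct unstable directions are coupled. An $O(|x|^2)$ error built from other unstable coordinates is therefore not small relative to $|y_j|$.

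This already happens for a pair component $C_{ij}$ with some $\lambda_k$ inside its circle and off the perpendicular bisector of $[\lambda_i,\lambda_j]$. On the face $\{i,j,k\}$ one computes
\[
\dot s=\gamma_{ij}s+b_k|u_k|^2+\cdots,\qquad b_k=4\Rea\bigl((\lambda_i-\lambda_j)\overline{(\lambda_k-c)}\bigr)\neq0,
\]
while $2\alpha_k=4(R^2-|\lambda_k-c|^2)\le\Delta_{ij}^2<\gamma_{ij}$. Keeping the displayed terms, with $|u_k(t)|^2=|u_k(0)|^2e^{2\alpha_kt}$, gives
\[
s(t)=e^{\gamma_{ij}t}\Bigl(s_0+\frac{b_k|u_k(0)|^2}{\gamma_{ij}-2\alpha_k}\Bigr)-\frac{b_k|u_k(0)|^2}{\gamma_{ij}-2\alpha_k}\,e^{2\alpha_kt}.
\]
So the admissible window for $s_0$ has width $\asymp\delta e^{-\gamma_{ij}T}$. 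It is centered near $-b_k|u_k(0)|^2/(\gamma_{ij}-2\alpha_k)$, which on $E_T$ can be of size $\delta^2e^{-2\alpha_kT}\gg\delta e^{-\gamma_{ij}T}$. Along such orbits $|s|$ grows at rate $2\alpha_k$, not $\gamma_{ij}$. Exponential decay of the stable part does not help, because the offending term comes from an unstable coordinate. The paper's coordinatewise sentence has the same defect.

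\textbf{The repair.} What saves the theorem is that the window is translated, not shrunk. So you must control the total unstable expansion rather than individual coordinates.
\begin{enumerate}
\item Straighten the local stable and unstable manifolds and use adapted norms. Then every orbit in $E_T$ satisfies $|x_s(t)|\lesssim\delta e^{-ct}$ and $|x_u(t)|\lesssim\delta e^{-c(T-t)}$, hence $\int_0^T|x(t)|\,dt=O(\delta)$ uniformly in $T$. This is the integrability you wanted.
\item Apply it to the Jacobian of $\Phi_T$ along $m(C)$-dimensional disks tangent to the unstable cone. Its logarithmic derivative is $\operatorname{tr}\Lambda_u=\Gamma(C)$, up to errors $O(|x|)$ and $O(\text{angle to the unstable directions})$, both integrable along such orbits.
\item Hence $\operatorname{vol}(E_T\cap D)\asymp e^{-\Gamma(C)T}\operatorname{vol}(\Phi_T(E_T\cap D))$ for each such disk $D$. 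The image lies in $U$, which gives the upper bound. By a degree argument it contains an unstable disk of fixed size, which gives the lower bound.
\item Fubini over a foliation of $U$ by such disks yields both inequalities.
\end{enumerate}

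You can also avoid normally hyperbolic theory altogether. The coefficient in $\dot u_i$ is real, so all phases are frozen, and Fubini--Study volume is a constant multiple of $dp\,d\theta$. For torus-invariant $U$ the statement therefore reduces to Lebesgue volume near the hyperbolic fixed point $p^*$ of the replicator field, in the orthant $\{p_k\ge0,\ k\notin S\}$. There each inactive $k$ has the single real rate $2\alpha_k$, and the positive rates again sum to $\Gamma(C)$.
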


\begin{proof}

This is the usual Gronwall estimate for a hyperbolic unstable coordinate: to stay inside a fixed-size neighborhood until time $T$, the initial unstable coordinate must be of size $e^{-\alpha_aT}$. The rates are collected in Propositions~\ref{prop:lin_inact} and \ref{prop:lin_act} and combining them we obtain $\Gamma(C)$.
 The coordinates $\theta$ along $C$ and the stable coordinates $y$ contribute only $T$-independent factors. The unstable coordinates contribute the product
\[
    \prod_{a=1}^m e^{-\alpha_aT}
    =
    e^{-(\alpha_1+\cdots+\alpha_m)T}
    =
    e^{-\Gamma(C)T}.
\]
Thus, for suitable constants $K_1,K_2>0$,
\[
    K_1e^{-\Gamma(C)T}
    \leq
    \nu(E_T(U,C))
    \leq
    K_2e^{-\Gamma(C)T}
\]
for all sufficiently large $T$. Taking logarithms and dividing by $T$ gives the claimed limit.
\end{proof}

\begin{cor}

Assume now that $A$ is Hermitian and
    $$\lambda_1<\lambda_2<\cdots<\lambda_n. $$
Then all eigenvalues lie on a line, and the positive critical components are exactly the pair components $C_{ij}$.

For $i<j$, put
\[
    \Delta_{ij}=\lambda_j-\lambda_i,
    \qquad
    c_{ij}=\frac{\lambda_i+\lambda_j}{2}.
\]
The inactive eigenvalues inside the pair circle are exactly the intermediate eigenvalues
\[
    \lambda_i<\lambda_k<\lambda_j.
\]
Therefore
\[
    \boxed{
    \Gamma(C_{ij})
    =
    2\Delta_{ij}^2
    +
    4\sum_{i<k<j}
    \left(
        \frac{\Delta_{ij}^2}{4}
        -
        (\lambda_k-c_{ij})^2
    \right).
    }
\]
In particular, if $\lambda_i$ and $\lambda_{i+1}$ are adjacent, the sum is empty and
\[
    \boxed{
    \Gamma(C_{i,i+1})=2(\lambda_{i+1}-\lambda_i)^2.
    }
\]
\end{cor}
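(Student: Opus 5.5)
The plan is to apply Theorem~\ref{thm:normal-critical-set} and the rate definitions to real, ordered spectra. Since all $\lambda_i$ are real, any circle meets the real line in at most two points. So an active support $S$ of a critical point with $|S|\ge 3$ is impossible: three distinct real points cannot be concyclic. Hence every positive-residual critical component has $|S|=2$. For a pair $\{i,j\}$ the condition $|\lambda_i-z|=|\lambda_j-z|$ with $z=p_i\lambda_i+p_j\lambda_j$ real forces $p_i=p_j=\tfrac12$. This yields exactly the components $C_{ij}$, with center $c_{ij}$ and radius $R=\Delta_{ij}/2$. This is the same computation as in the proof of Theorem~\ref{thm:gen_crit_skel}, which does not need general position for $|S|=2$.

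Next I would compute the active rate from Proposition~\ref{prop:lin_act} and the remark after it. Here $V_S$ is one-dimensional, spanned by $a=(1,-1)$, so $\delta z=\lambda_i-\lambda_j=-\Delta_{ij}$. We have $q_i=-\Delta_{ij}/2$, $q_j=\Delta_{ij}/2$ and $p^*_i=p^*_j=\tfrac12$. Then $(L_Sa)_i=8\cdot\tfrac12\cdot(-\Delta_{ij}/2)(-\Delta_{ij})=2\Delta_{ij}^2$ and $(L_Sa)_j=-2\Delta_{ij}^2$, so $L_Sa=2\Delta_{ij}^2 a$. As a cross-check, $8M_S=8\cdot 2\cdot\tfrac12\cdot\Delta_{ij}^2/4=2\Delta_{ij}^2$ in the real direction. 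Thus $r=1$ and $\gamma^{\rm act}=2\Delta_{ij}^2$.

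For the inactive part I would use Proposition~\ref{prop:lin_inact}. Since $\lambda_k$ is real, $|\lambda_k-c_{ij}|<\Delta_{ij}/2$ holds iff $\lambda_i<\lambda_k<\lambda_j$, and no inactive $\lambda_k$ lies on the circle because the spectrum is simple. So $I_{\rm in}(C_{ij})=\{k: i<k<j\}$, and each such $k$ contributes $4\bigl(\Delta_{ij}^2/4-(\lambda_k-c_{ij})^2\bigr)$. Substituting into the definition of $\Gamma(C)$ gives the boxed formula. For adjacent indices the sum is empty, which gives $\Gamma(C_{i,i+1})=2(\lambda_{i+1}-\lambda_i)^2$.

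The only delicate point is the claim that the pair components are exactly the positive critical components. Collinearity violates the general-position hypothesis of Theorem~\ref{thm:gen_crit_skel}, so that theorem cannot be cited directly. Instead one must argue from Theorem~\ref{thm:normal-critical-set}, as above, that supports of size at least three cannot occur. A minor secondary point is that the Morse--Bott hypothesis of Theorem~\ref{thm:local-trapping} is not needed for the formula itself, since $\Gamma$ is defined purely from the rates. It does hold here: all transverse rates, namely the active rate and $2(R^2-|\lambda_k-c|^2)$ for every inactive $k$, are nonzero.
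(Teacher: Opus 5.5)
Your proposal is correct and follows the route the paper intends. The paper writes no separate proof and treats the corollary as a direct specialization of Theorem~\ref{thm:normal-critical-set}, Propositions~\ref{prop:lin_inact} and~\ref{prop:lin_act}, and the definition of $\Gamma(C)$, exactly as you do; your observation that Theorem~\ref{thm:gen_crit_skel} cannot be cited (collinearity breaks general position), and that supports of size at least three are instead excluded because three distinct real points are never concyclic, supplies the one step the paper leaves implicit.
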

Thus the slowest pair bottlenecks in the Hermitian case are adjacent small-gap pairs. The characteristic trapping time scale near such a component is
\[
    T_{i,i+1}\asymp \frac{1}{2(\lambda_{i+1}-\lambda_i)^2}.
\]

\section{Polynomial filtering near the critical set}\label{sec:critical-filtering}
It is easy to see when a trajectory is near the saddle set: naturally, the residual is relatively big and changes slowly. It is, therefore, instrumental to accelerate escape from the neighbourhood of the saddle set, and there might be different practical ways to do so. Here we suggest and analyze one of them, perhaps one of the simplest: once a trajectory enters neighbourhood of a critical component, apply a random low-degree polynomial in $A$.

For a normal matrix, the effect of a polynomial filter is especially
simple in spectral coordinates.
\begin{prop}\label{prop:polynomial-reweighting}
Let $q$ be a polynomial such that $q(\lambda_i)\neq0$ for every $i$.
For a unit vector $u$, put
\[
    v=\frac{q(A)u}{\|q(A)u\|},\qquad a_i=|q(\lambda_i)|^2.
\]
Then
\[
    p_i^{(q)}=|v_i|^2=\frac{a_ip_i}{\sum_j a_jp_j}.
\]
\end{prop}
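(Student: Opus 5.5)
The plan is to work in the diagonal coordinates fixed at the start of Section~\ref{sec:qual_prop}, where $A=\diag(\lambda_1,\ldots,\lambda_n)$ after a unitary change of basis. This change is harmless for two reasons. By Proposition~\ref{prop:basic_prop} the weights $p_i=|u_i|^2$ are the squared norms of the spectral projections $P_iu$. Moreover, $q(U^*AU)=U^*q(A)U$, so the filtered vector transforms equivariantly. We may therefore assume $A$ is diagonal. Then $q(A)=\diag(q(\lambda_1),\ldots,q(\lambda_n))$, since powers and linear combinations of diagonal matrices act entrywise on the diagonal.

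The computation then has three steps.
\begin{enumerate}
\item Apply the filter coordinatewise: $(q(A)u)_i=q(\lambda_i)u_i$.
\item Compute the norm of the filtered vector:
\[
\|q(A)u\|^2=\sum_j|q(\lambda_j)|^2|u_j|^2=\sum_j a_jp_j.
\]
This quantity is strictly positive. Indeed, every $a_j>0$ by the hypothesis $q(\lambda_j)\neq0$, and $\sum_j p_j=1$, so at least one $p_j>0$. Hence $v$ is well defined.
\item Divide to obtain $|v_i|^2=a_ip_i/\sum_j a_jp_j$.
\end{enumerate}

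If one wants the statement for normal $A$ without diagonalizing, the same argument goes through with spectral projections. One uses $q(A)=\sum_\alpha q(\mu_\alpha)P_\alpha$ and the orthogonality of the $P_\alpha$. This gives $\|P_\alpha q(A)u\|^2=|q(\mu_\alpha)|^2p_\alpha$.

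There is no real obstacle here. The only point needing care is the nonvanishing of the denominator, and this is exactly where the hypothesis $q(\lambda_i)\neq0$ is used. Without that hypothesis the formula would still hold whenever $\sum_j a_jp_j>0$, but coordinates with $q(\lambda_i)=0$ would simply be annihilated.
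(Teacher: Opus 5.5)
Your proposal is correct and follows the paper's proof: in an orthonormal eigenbasis, $(q(A)u)_i=q(\lambda_i)u_i$, and you take squared moduli and normalize. Your explicit check that $\sum_j a_jp_j>0$ is a small addition the paper leaves implicit; also, the identification $p_i=\|P_iu\|^2$ comes from the eigenbasis itself, not from Proposition~\ref{prop:basic_prop}.
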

\begin{proof}
In an orthonormal eigenbasis, $(q(A)u)_i=q(\lambda_i)u_i$.
Taking squared moduli and normalizing gives the formula.
\end{proof}
This identity applies to any current state of a trajectory. The distribution
of the filtered ensemble, when $u$ is Haar-distributed, is treated separately
in Section~\ref{sec:concentration}.

Write $a_i=e^{\eta_i}$; adding a common constant to the $\eta_i$ does not
change the filtered vector's spectral weights.
The active linearization is self-adjoint for the inner product
$\langle a,b\rangle_* =\sum_{i\in S}a_ib_i/p_i^*$.
Below, $P_U$ denotes the corresponding orthogonal projection onto its
unstable subspace.
\begin{prop}\label{prop:polynomial-kick}
For small $\eta=(\eta_1,\ldots,\eta_n)$,
\[
    \boxed{
    p_i^{(q)}-p_i
    =
    p_i(\eta_i-\overline\eta_p)+O(\|\eta\|^2),
    }
\]
where
\[
    \overline\eta_p=\sum_jp_j\eta_j.
\]
Consequently, if $U$ denotes the unstable subspace of the active-support linearization in probability coordinates, then the unstable coordinate injected by the filter is
\[
    \boxed{
    P_U\bigl(p^{(q)}-p\bigr)
    =
    P_U\left(p_i(\eta_i-\overline\eta_p)\right)_{i=1}^n
    +O(\|\eta\|^2).
    }
\]
Let $b_q=P_U(p^{(q)}-p)$ and let $\gamma_{\min}$ be the smallest
positive active escape rate. In the linearized active model, the time to
reach an unstable-coordinate threshold $\delta>\|b_q\|$ satisfies
\[
    T_{\rm esc}^{q}\leq
    \frac1{\gamma_{\min}}\log\frac{\delta}{\|b_q\|},
\]
provided $b_q\neq0$, with the norm chosen in the self-adjoint active coordinates.
\end{prop}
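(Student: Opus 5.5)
The plan has three steps: a first-order expansion of the reweighting formula, a projection onto the unstable subspace, and a scalar Gronwall argument in the linearized active model.

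\emph{Step 1: the first-order kick.} Start from Proposition~\ref{prop:polynomial-reweighting} with $a_i=e^{\eta_i}$, so that
\[
p_i^{(q)}=\frac{p_ie^{\eta_i}}{\sum_j p_je^{\eta_j}}.
\]
Expand $e^{\eta_i}=1+\eta_i+O(\|\eta\|^2)$. Since $\sum_jp_j=1$, the denominator becomes $1+\overline\eta_p+O(\|\eta\|^2)$. Inverting it and multiplying out gives
\[
p_i^{(q)}=p_i\bigl(1+\eta_i-\overline\eta_p\bigr)+O(\|\eta\|^2).
\]
The remainder is uniform because $p$ ranges over the compact simplex and $|\eta_i|\le\|\eta\|$. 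This also confirms the invariance of $p^{(q)}$ under adding a common constant to the $\eta_i$.

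\emph{Step 2: projecting onto $U$.} The projection $P_U$ is linear and bounded; the relevant coordinates are the active ones, in the $\langle\cdot,\cdot\rangle_*$ geometry. Applying $P_U$ to the expansion from Step 1 therefore gives the second boxed formula directly, with an $O(\|\eta\|^2)$ error. Two checks are needed here.

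\begin{itemize}
\item The first-order kick $\bigl(p_i(\eta_i-\overline\eta_p)\bigr)_i$ sums to zero, so it lies in the tangent space of the simplex.
\item Its active part lies in $V_S$, which is the domain of $L_S$.
\end{itemize}

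I would also verify the self-adjointness claim. By Proposition~\ref{prop:lin_act},
\[
\langle L_Sa,b\rangle_*=8\sum_{i\in S}\Rea(q_i\overline{\delta z_a})\,b_i=8\,\Rea\bigl(\overline{\delta z_a}\,\delta z_b\bigr),
\]
which is symmetric in $a$ and $b$. Hence $L_S$ has an $\langle\cdot,\cdot\rangle_*$-orthogonal eigenbasis, and $P_U$ is a genuine orthogonal projection.

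\emph{Step 3: the escape-time bound.} In the linearized active model, the unstable component obeys $\dot y=L_Sy$ with $y(0)=b_q$. Let $\|\cdot\|_*$ denote the norm induced by $\langle\cdot,\cdot\rangle_*$. Expanding $y$ in the orthonormal eigenbasis of $U$, each coordinate grows like $e^{\gamma_\ell t}$ with $\gamma_\ell\ge\gamma_{\min}$. This yields
\[
\frac{d}{dt}\|y\|_*^2=2\langle L_Sy,y\rangle_*\ge 2\gamma_{\min}\|y\|_*^2,
\]
so $\|y(t)\|_*\ge e^{\gamma_{\min}t}\|b_q\|_*$. The threshold $\delta$ is therefore reached no later than the time $T$ solving $e^{\gamma_{\min}T}\|b_q\|=\delta$, which gives the stated bound. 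Positivity of that time requires $\delta>\|b_q\|$ and $b_q\neq0$.

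The main obstacle is bookkeeping rather than analysis. The kick is defined on all $n$ coordinates, while $U$ lives in the active variables, and the bound holds cleanly only in the $*$-norm, hence the proviso in the statement. If inactive unstable directions (Proposition~\ref{prop:lin_inact}) are meant to be included, I would treat them the same way. Their equations $\dot p_k=4(R^2-|\lambda_k-c|^2)p_k$ are already diagonal, so each inactive direction contributes a rate at least $\gamma_{\min}$ provided $\gamma_{\min}$ is taken over all unstable rates.
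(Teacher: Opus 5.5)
Your proposal is correct and follows the paper's proof. It uses the same first-order expansion of $p_ie^{\eta_i}/\sum_jp_je^{\eta_j}$, the linearity of $P_U$, and exponential growth of the unstable component at rate at least $\gamma_{\min}$; the paper merely asserts this growth, whereas you justify it by checking $\langle L_Sa,b\rangle_*=8\Rea(\overline{\delta z_a}\,\delta z_b)$ and applying the Rayleigh-quotient bound on $U$. One harmless slip: the active part of the kick lies in $V_S$ only when the inactive weights $p_k$ vanish, as they do in the linearized active model, but you do not need that check, since $P_U$ is a bounded linear map on all of $\mathbb{R}^S$.
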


\begin{proof}
We have
\[
    p_i^{(q)}=
    \frac{p_ie^{\eta_i}}
    {\sum_jp_je^{\eta_j}}.
\]
For small $\eta$,
\[
    e^{\eta_i}=1+\eta_i+O(\|\eta\|^2)
\]
and
\[
    \sum_jp_je^{\eta_j}
    =
    1+\sum_jp_j\eta_j+O(\|\eta\|^2)
    =
    1+\overline\eta_p+O(\|\eta\|^2).
\]
Therefore
\[
    \frac{1}{\sum_jp_je^{\eta_j}}
    =
    1-\overline\eta_p+O(\|\eta\|^2).
\]
Multiplying gives
\[
    p_i^{(q)}
    =
    p_i(1+\eta_i)(1-\overline\eta_p)+O(\|\eta\|^2)
    =
    p_i+p_i(\eta_i-\overline\eta_p)+O(\|\eta\|^2).
\]
Projecting onto the unstable subspace gives the second formula. In the linearized active model, the unstable norm grows at least as $e^{\gamma_{\min}t}\|b_q\|$, which gives the escape-time bound.
\end{proof}
Consider the face of the simplex supported on two eigenvalues $\lambda_i,\lambda_j$. Put
\[
    \Delta_{ij}=|\lambda_i-\lambda_j|.
\]
Inside this face define
\[
    s=\frac{p_i-p_j}{p_i+p_j}.
\]
Since $p_i+p_j=1$ on the face, this is simply $s=p_i-p_j$.

\begin{prop}\label{prop:pair-escape}
On the two-eigenvalue face, the imbalance satisfies
\[
    \boxed{
    \dot s=2\Delta_{ij}^2s(1-s^2).
    }
\]
Let
\[
    \gamma_{ij}=2\Delta_{ij}^2.
\]
Fix an escape threshold $0<s_*<1$. If $0<|s_0|<s_*$, the time needed to reach $|s(t)|=s_*$ is
\[
    \boxed{
    T_{\rm esc}(s_0)=
    \frac{1}{2\gamma_{ij}}
    \log\left(
    \frac{s_*^2(1-s_0^2)}
    {s_0^2(1-s_*^2)}
    \right).
    }
\]
In particular, as $s_0\to0$,
\[
    \boxed{
    T_{\rm esc}(s_0)=
    \frac{1}{\gamma_{ij}}\log\frac1{|s_0|}+O(1).
    }
\]
\end{prop}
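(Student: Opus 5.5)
Restricted to the face supported on $\{i,j\}$ (all other coordinates zero, which is invariant under the simplex flow of Proposition~\ref{prop:normal-simplex-flow} because $\dot p_k$ is proportional to $p_k$), the flow reduces to a scalar ODE for $s$. I first compute everything in terms of $s$. With $p_i=(1+s)/2$ and $p_j=(1-s)/2$ we have $z=p_i\lambda_i+p_j\lambda_j$. Hence
\[
\lambda_i-z=p_j(\lambda_i-\lambda_j),\qquad \lambda_j-z=-p_i(\lambda_i-\lambda_j),
\]
so $d_i=p_j^2\Delta_{ij}^2$ and $d_j=p_i^2\Delta_{ij}^2$. This gives
\[
\rho=p_id_i+p_jd_j=p_ip_j(p_i+p_j)\Delta_{ij}^2=p_ip_j\Delta_{ij}^2 .
\]

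Next I substitute into Proposition~\ref{prop:normal-simplex-flow}. Since $p_i+p_j=1$,
\[
\dot p_i=4p_i(\rho-d_i)=4p_ip_j(p_i-p_j)\Delta_{ij}^2,
\]
and $\dot p_j=-\dot p_i$. Therefore
\[
\dot s=2\dot p_i=8p_ip_j\,s\,\Delta_{ij}^2 .
\]
Using $4p_ip_j=(p_i+p_j)^2-(p_i-p_j)^2=1-s^2$, this becomes
\[
\dot s=2\Delta_{ij}^2\,s(1-s^2)=\gamma_{ij}\,s(1-s^2).
\]

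For the escape time, the set $\{s=0\}$ and the set $\{|s|=1\}$ are invariant, so a solution with $0<|s_0|<s_*<1$ keeps its sign and $|s|$ increases monotonically. I separate variables. The partial fractions
\[
\frac{1}{s(1-s^2)}=\frac1s+\frac{s}{1-s^2}
\]
give
\[
\frac{d}{dt}\Bigl[\tfrac12\log\frac{s^2}{1-s^2}\Bigr]=\gamma_{ij}.
\]
Integrating from $s_0$ to $|s|=s_*$ yields the boxed formula for $T_{\rm esc}(s_0)$. As $s_0\to0$, the term $\log(1-s_0^2)$ is $O(s_0^2)$ and $s_*$ is fixed, so
\[
T_{\rm esc}=\frac{1}{2\gamma_{ij}}\log s_0^{-2}+O(1)=\frac{1}{\gamma_{ij}}\log\frac{1}{|s_0|}+O(1).
\]

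No step is a real obstacle. The only point needing care is the normalization of rates: the factor $4$ in the simplex flow, the factor $2$ relating $\dot s$ to $\dot p_i$, and the identity $4p_ip_j=1-s^2$. These must combine to $2\Delta_{ij}^2$, consistent with $\Gamma(C_{i,i+1})=2(\lambda_{i+1}-\lambda_i)^2$ in the Hermitian corollary. As a cross-check, I would compare with Proposition~\ref{prop:lin_act}. At $p^*=(\tfrac12,\tfrac12)$ we have $q_i=-q_j=(\lambda_i-\lambda_j)/2$ and $\delta z=a_i(\lambda_i-\lambda_j)$. This gives $\dot a_i=8\cdot\tfrac12\cdot\tfrac12\,\Delta_{ij}^2a_i=2\Delta_{ij}^2a_i$, matching the linearization of the $s$-equation at $s=0$.
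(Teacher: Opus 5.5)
Your proposal is correct and follows essentially the paper's proof. Like the paper, you restrict to the face, compute $\rho$ and $|\lambda_k-z|^2$, obtain $\dot s=2\dot p_i=2\Delta_{ij}^2 s(1-s^2)$, and integrate the quantity $\log\frac{s^2}{1-s^2}$. The only differences are cosmetic: you write $\rho=p_ip_j\Delta_{ij}^2$ and separate variables by partial fractions, where the paper parametrizes by $s$ and solves the logistic equation for $y=s^2$.
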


\begin{proof}
On the two-eigenvalue face write
\[
    p_i=\frac{1+s}{2},
    \qquad
    p_j=\frac{1-s}{2}.
\]
Then
\[
    z=p_i\lambda_i+p_j\lambda_j
    =\frac{\lambda_i+\lambda_j}{2}+\frac{s}{2}(\lambda_i-\lambda_j).
\]
A direct computation gives
\[
    \rho=p_i|\lambda_i-z|^2+p_j|\lambda_j-z|^2
    =\frac{\Delta_{ij}^2}{4}(1-s^2).
\]
Moreover
\[
    |\lambda_i-z|^2=\frac{\Delta_{ij}^2}{4}(1-s)^2,
    \qquad
    |\lambda_j-z|^2=\frac{\Delta_{ij}^2}{4}(1+s)^2.
\]
Using
\[
    \dot p_k=4p_k(\rho-|\lambda_k-z|^2),
\]
we find
\[
    \dot p_i
    =4\frac{1+s}{2}\left[
    \frac{\Delta_{ij}^2}{4}(1-s^2)-
    \frac{\Delta_{ij}^2}{4}(1-s)^2
    \right]
    =\Delta_{ij}^2 s(1-s^2).
\]
Since $s=2p_i-1$, we get
\[
    \dot s=2\dot p_i=2\Delta_{ij}^2s(1-s^2).
\]
Thus $y=s^2$ satisfies
\[
    \dot y=2\gamma_{ij}y(1-y).
\]
Solving this logistic equation gives
\[
    \frac{y(t)}{1-y(t)}=
    \frac{y_0}{1-y_0}e^{2\gamma_{ij}t}.
\]
Setting $y(t)=s_*^2$ and $y_0=s_0^2$ gives
\[
    T_{\rm esc}(s_0)=
    \frac{1}{2\gamma_{ij}}
    \log\left(
    \frac{s_*^2(1-s_0^2)}
    {s_0^2(1-s_*^2)}
    \right).
\]
The asymptotic formula follows immediately.
\end{proof}

\begin{cor}[Acceleration by a polynomial filter]\label{cor:pair-acceleration}
On the two-eigenvalue face, suppose that a polynomial filter changes the initial imbalance from $s_0$ to $s_0^{(q)}$, and both satisfy
\[
    0<|s_0|,|s_0^{(q)}|<s_*.
\]
Then the exact time saved is
\[
    \boxed{
    T_{\rm esc}(s_0)-T_{\rm esc}(s_0^{(q)})
    =
    \frac{1}{2\gamma_{ij}}
    \log\left(
    \frac{(s_0^{(q)})^2(1-s_0^2)}
    {s_0^2(1-(s_0^{(q)})^2)}
    \right).
    }
\]
If both imbalances are small, then
\[
    \boxed{
    T_{\rm esc}(s_0)-T_{\rm esc}(s_0^{(q)})
    =
    \frac{1}{\gamma_{ij}}
    \log\frac{|s_0^{(q)}|}{|s_0|}+o(1).
    }
\]
In particular, doubling the unstable imbalance saves approximately
\[
    \boxed{
    \frac{\log 2}{2|\lambda_i-\lambda_j|^2}
    }
\]
units of residual-flow time.
\end{cor}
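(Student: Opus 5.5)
The plan is to subtract the two closed-form escape times from Proposition~\ref{prop:pair-escape}, then expand the result for small imbalances. The hypothesis $0<|s_0|,|s_0^{(q)}|<s_*$ is exactly what lets us apply that proposition to both starting imbalances with the same threshold $s_*$.

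\emph{Exact formula.} Write
\[
T_{\rm esc}(s)=\frac{1}{2\gamma_{ij}}\Bigl[\log\frac{s_*^2}{1-s_*^2}+\log\frac{1-s^2}{s^2}\Bigr].
\]
The $s_*$-dependent term is the same for both starts, so it cancels in the difference. What remains is
\[
\frac{1}{2\gamma_{ij}}\log\frac{(1-s_0^2)\,(s_0^{(q)})^2}{s_0^2\,\bigl(1-(s_0^{(q)})^2\bigr)},
\]
which is the first boxed identity. One could also argue directly from the conserved logistic quantity $\frac{y}{1-y}e^{-2\gamma_{ij}t}$, but the algebraic cancellation above is enough.

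\emph{Small imbalances.} Split the logarithm as
\[
\log\frac{(s_0^{(q)})^2}{s_0^2}+\log(1-s_0^2)-\log\bigl(1-(s_0^{(q)})^2\bigr).
\]
The first piece equals $2\log\bigl(|s_0^{(q)}|/|s_0|\bigr)$. The last two pieces are $O(s_0^2)+O\bigl((s_0^{(q)})^2\bigr)=o(1)$ as both imbalances tend to zero. Dividing by $2\gamma_{ij}$ gives
\[
\frac{1}{\gamma_{ij}}\log\frac{|s_0^{(q)}|}{|s_0|}+o(1).
\]

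\emph{Doubling.} Set $|s_0^{(q)}|=2|s_0|$ and $\gamma_{ij}=2|\lambda_i-\lambda_j|^2$. The saving is then approximately $\log 2/\bigl(2|\lambda_i-\lambda_j|^2\bigr)$. The error is $o(1)$ as $s_0\to0$; it is exactly $\frac{1}{2\gamma_{ij}}\log\frac{1-s_0^2}{1-4s_0^2}$, which is small provided $2|s_0|<s_*$.

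There is no real obstacle. The only point needing care is that the $o(1)$ is understood as both imbalances tend to zero, and that the hypothesis keeps each start strictly below the threshold, so that both escape times are positive and given by the same formula.
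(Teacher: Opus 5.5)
Your proposal is correct and matches the paper's proof, which subtracts the two exact escape-time formulas from Proposition~\ref{prop:pair-escape} and then reads off the small-imbalance asymptotic and the doubling estimate using $\gamma_{ij}=2|\lambda_i-\lambda_j|^2$. One wording fix: the exact doubling error $\frac{1}{2\gamma_{ij}}\log\frac{1-s_0^2}{1-4s_0^2}$ is small because $s_0$ is small, not merely because $2|s_0|<s_*$. That condition only guarantees that the formula applies; for $s_*$ near $1$ this error can exceed the main term $\frac{\log 2}{\gamma_{ij}}$.
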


\begin{proof}
Subtract the exact escape-time formula for $s_0^{(q)}$ from the exact escape-time formula for $s_0$. Since $\gamma_{ij}=2|\lambda_i-\lambda_j|^2$, the final sentence follows from the small-imbalance asymptotic.
\end{proof}

It remains to analyze how polynomial filtering acts on the imbalance coordinate.

Fix two indices $i,j$ and assume $p_i,p_j>0$. Define
\[
    \ell_{ij}=\log\frac{p_i}{p_j}.
\]
Then
\[
    s_{ij}=\tanh\left(\frac{\ell_{ij}}2\right).
\]

\section{Concentration phenomena and polynomial filtering}
\label{sec:concentration}
The preceding section concerns escape from a neighborhood of a critical
component. We now turn to a different question: how the initial measure
determines the part of the spectrum explored by an ensemble of trajectories.
Throughout this section $A$ is normal, and we work in an orthonormal
eigenbasis.

Consider a matrix with $120$ eigenvalues sampled independently and uniformly
by area from a disk, then translated to have mean zero and scaled to spectral
diameter $D=2$. We evolve $600$ Haar-distributed starts by the retracted
Euler flow of Proposition~\ref{prop:gradient-flow}, with step $h=0.0125$.
Write $\tau=D^2t$ for dimensionless time.
Figure~\ref{fig:paper-haar-concentration} compares the initial and final
Rayleigh clouds at $\tau=0$ and $\tau=30$. The eigenvalues occupy the full
disk, but the initial Rayleigh points concentrate near its center, and the
final cloud still explores mainly the central part of the spectrum.
The median modulus increases from $\PaperDiskHaarInitialRadius$ to
$\PaperDiskHaarFinalRadius$.

\begin{figure}[H]
    \centering
    \includegraphics[width=0.92\linewidth]{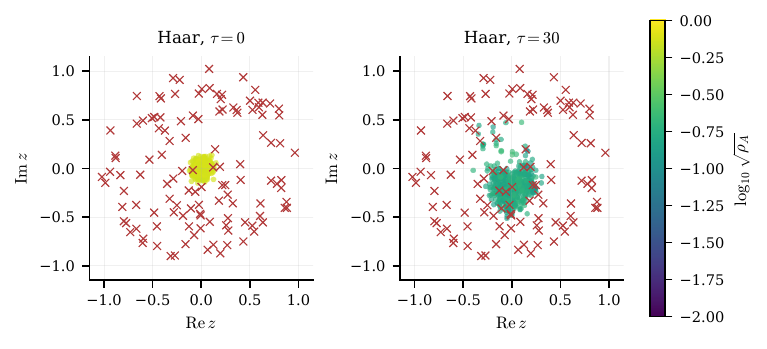}
    \caption{Concentration for a normal matrix with a random disk spectrum.
    Left: the initial Rayleigh points of $600$ Haar starts. Right: the same
    trajectories at $\tau=30$. Red crosses mark the $120$ eigenvalues, and
    colors encode the residual norm on a common logarithmic scale. No
    polynomial filter is applied; both panels have the same axis limits.}
    \label{fig:paper-haar-concentration}
\end{figure}

The concentration has a simple quantitative origin. For a Haar-distributed
unit vector $u\in\mathbb C^n$, the weights $p_i=|u_i|^2$ are uniformly
distributed on the simplex. Equivalently,
\[
    p_i=\frac{E_i}{\sum_jE_j},
\]
where the $E_i$ are independent exponential random variables of mean one.
The distribution of $z_A(u)$ is the numerical shadow of $A$;
see~\cite{DUNKL20112042}.

\begin{prop}[Haar concentration]\label{prop:haar-concentration}
Put $\bar\lambda=n^{-1}\sum_i\lambda_i$. Then
\[
    \mathbb E z_A(u)=\bar\lambda,\qquad
    \mathbb E|z_A(u)-\bar\lambda|^2
    =\frac{1}{n+1}\left(\frac1n\sum_i|\lambda_i-\bar\lambda|^2\right).
\]
In particular, if $s^2=n^{-1}\sum_i|\lambda_i-\bar\lambda|^2$, then
for every $r>0$,
\[
    \mathbb P\{|z_A(u)-\bar\lambda|\geq r\}
    \leq\frac{s^2}{(n+1)r^2}.
\]
\end{prop}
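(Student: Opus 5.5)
We work in an orthonormal eigenbasis, so that $z_A(u)=\sum_i p_i\lambda_i$ with $p=(p_1,\ldots,p_n)$ uniformly distributed on the simplex. Equivalently, $p$ is Dirichlet$(1,\ldots,1)$ distributed, and we realize it as $p_i=E_i/\sum_jE_j$ with independent mean-one exponentials $E_i$. The plan has three steps: compute the first two moments of the $p_i$, substitute them into the mean and variance of the linear statistic $z_A(u)$, and apply Chebyshev's inequality.

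\emph{Moments.} By exchangeability and $\sum_ip_i=1$ we get $\mathbb E p_i=1/n$. For the second moments, the Dirichlet$(1,\ldots,1)$ formulas give
\[
    \mathbb E p_i^2=\frac{2}{n(n+1)},
    \qquad
    \mathbb E p_ip_j=\frac{1}{n(n+1)}\quad(i\neq j).
\]
To avoid citing the Dirichlet moment formula, we derive these from the exponential representation. Since $\sum_j E_j$ is Gamma$(n)$ and independent of $p$, we have $\mathbb E[E_iE_j]=\mathbb E[p_ip_j]\,\mathbb E[(\sum_j E_j)^2]=\mathbb E[p_ip_j]\,n(n+1)$. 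Together with $\mathbb E E_i^2=2$ and $\mathbb E E_iE_j=1$ for $i\ne j$, this yields the displayed values. Consequently
\[
    \operatorname{Cov}(p_i,p_j)=\frac{\delta_{ij}}{n(n+1)}-\frac{1}{n^2(n+1)}.
\]

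\emph{Mean and variance of $z_A(u)$.} Linearity gives $\mathbb E z_A(u)=\sum_i\lambda_i/n=\bar\lambda$. Put $w_i=\lambda_i-\bar\lambda$, so that $\sum_iw_i=0$, and note that $z_A(u)-\bar\lambda=\sum_ip_iw_i$ because $\sum_i p_i=1$. Then
\[
    \mathbb E\Bigl|\sum_ip_iw_i\Bigr|^2=\sum_{i,j}\mathbb E[p_ip_j]\,w_i\overline{w_j}.
\]
Splitting the second-moment matrix as $\frac{1}{n(n+1)}(I+\mathbf 1\mathbf 1^T)$, the rank-one part contributes $|\sum_i w_i|^2=0$. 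What remains is $\frac{1}{n(n+1)}\sum_i|w_i|^2=\frac{s^2}{n+1}$, which is the stated identity.

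\emph{Tail bound.} Chebyshev's (Markov's) inequality applied to $|z_A(u)-\bar\lambda|^2$ gives $\mathbb P\{|z_A(u)-\bar\lambda|\ge r\}\le s^2/((n+1)r^2)$.

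The only step requiring care is the second-moment computation of the Dirichlet weights. The independence of $p$ from $\sum_j E_j$ is a standard property of the Gamma/Dirichlet representation. If we prefer not to rely on it, an alternative is to integrate directly over the simplex with density $(n-1)!$, using $\int_{\Delta}p_1^2=2/(n+1)!$ and $\int_\Delta p_1p_2=1/(n+1)!$.
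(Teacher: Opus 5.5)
Your proposal is correct and follows the paper's proof: the Dirichlet moments $\mathbb E p_i=1/n$ and $\mathbb E p_ip_j=(1+\delta_{ij})/[n(n+1)]$, cancellation of the rank-one (mixed) term because $\sum_i(\lambda_i-\bar\lambda)=0$, and Markov's inequality applied to $|z_A(u)-\bar\lambda|^2$. Your derivation of those moments from the exponential representation, using the independence of $p$ and $\sum_jE_j$, supplies a justification for the Dirichlet moments, which the paper simply quotes.
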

\begin{proof}
The Dirichlet moments are
$\mathbb E p_i=1/n$ and
$\mathbb E p_ip_j=(1+\delta_{ij})/[n(n+1)]$.
Using $z_A(u)=\sum_i p_i\lambda_i$ gives the mean. Since
$\sum_i(\lambda_i-\bar\lambda)=0$, the mixed terms in the centered second
moment cancel, giving the stated variance. The probability bound follows
by applying Markov's inequality to $|z_A(u)-\bar\lambda|^2$.
\end{proof}
Thus, for spectra of bounded diameter, the root-mean-square width of the
initial Rayleigh cloud is of order $n^{-1/2}$. This concerns the initial
ensemble; the probabilities of eventual capture also depend on the basins
of the nonlinear flow. It nevertheless motivates changing the initial
measure before applying that flow.

Let $q$ be a polynomial with $a_i=|q(\lambda_i)|^2>0$.
By Proposition~\ref{prop:polynomial-reweighting}, the filtered Haar weights
are
\[
    p_i^{(q)}=\frac{a_iE_i}{\sum_j a_jE_j}.
\]
\begin{prop}\label{prop:filtered-density}
With respect to $dp_1\cdots dp_{n-1}$ on the simplex, the density is
\[
    f_q(p)=(n-1)!\,
    \frac{\prod_i a_i^{-1}}{\left(\sum_i p_i/a_i\right)^n}.
\]
\end{prop}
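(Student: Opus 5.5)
We compute the density by a change of variables from independent exponentials. Set $Y_i=a_iE_i$. These are independent exponential variables with means $a_i$, so the joint density of $Y=(Y_1,\ldots,Y_n)$ on $(0,\infty)^n$ is
\[
    \prod_i a_i^{-1}\exp\Bigl(-\sum_i y_i/a_i\Bigr).
\]
By Proposition~\ref{prop:polynomial-reweighting}, the filtered weights are $p^{(q)}=Y/\sum_jY_j$. We therefore pass to the coordinates $(S,p_1,\ldots,p_{n-1})$, where $S=\sum_jY_j$ and $p_i=Y_i/S$, with $p_n=1-\sum_{i<n}p_i$. The inverse map is $y_i=Sp_i$, and its Jacobian has the standard value $S^{n-1}$. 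This is the same computation that shows that normalized i.i.d.\ exponentials are uniform on the simplex.

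The joint density of $(S,p)$ is then
\[
    \prod_i a_i^{-1}\,S^{n-1}\exp\Bigl(-S\sum_i p_i/a_i\Bigr),
    \qquad S>0,\ p\in\Delta^{n-1}.
\]
We integrate out $S$ using $\int_0^\infty S^{n-1}e^{-\kappa S}\,dS=(n-1)!\,\kappa^{-n}$ with $\kappa=\sum_ip_i/a_i>0$. This gives exactly
\[
    f_q(p)=(n-1)!\,\frac{\prod_i a_i^{-1}}{\bigl(\sum_i p_i/a_i\bigr)^n}.
\]
As a sanity check, setting $a_i\equiv1$ returns the uniform density $(n-1)!$, which matches the Haar case recalled before Proposition~\ref{prop:haar-concentration}.

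The only step needing care is the Jacobian $S^{n-1}$. We verify it directly. The map $(S,p_1,\ldots,p_{n-1})\mapsto(Sp_1,\ldots,Sp_{n-1},S(1-\sum_{i<n}p_i))$ has a Jacobian matrix whose first $n-1$ rows are $(p_i,\,S e_i^T)$ and whose last row is $(p_n,\,-S\mathbf 1^T)$. Adding the first $n-1$ rows to the last row turns it into $(1,0,\ldots,0)$. Expanding along that row gives the determinant $\pm S^{n-1}$, as required.
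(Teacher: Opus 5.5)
Your proposal is correct and follows the paper's proof: you set $Y_i=a_iE_i$, change variables to $y_i=Sp_i$ with Jacobian $S^{n-1}$, and integrate out $S$ via $\int_0^\infty S^{n-1}e^{-\kappa S}\,dS=(n-1)!\,\kappa^{-n}$. Your explicit row-reduction check of the Jacobian and the $a_i\equiv1$ sanity check are details the paper leaves implicit, but they do not change the argument.
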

\begin{proof}
Put $Y_i=a_iE_i$. The joint density of $Y$ is
$\prod_i a_i^{-1}\exp(-\sum_iY_i/a_i)$.
Writing $Y_i=sp_i$, where $s>0$, gives the Jacobian $s^{n-1}$.
Integrating out $s$ yields
\[
    f_q(p)=\prod_i a_i^{-1}\int_0^\infty
    e^{-s\sum_i p_i/a_i}s^{n-1}\,ds
    =(n-1)!\,\frac{\prod_i a_i^{-1}}
    {\left(\sum_i p_i/a_i\right)^n}.
\]
\end{proof}
\begin{cor}\label{cor:filtered-regions}
Let $B_1,B_2\subset\Delta^{n-1}$ be measurable sets of positive volume, and let $h_1,h_2>0$. Suppose that
\[
    \sum_i p_i/a_i\le h_1
    \qquad\text{for }p\in B_1,
\]
and
\[
    \sum_i p_i/a_i\ge h_2
    \qquad\text{for }p\in B_2.
\]
Then
\[
    \boxed{
    \frac{\mathbb P_q(B_1)}{\mathbb P_q(B_2)}
    \ge
    \left(\frac{h_2}{h_1}\right)^n
    \frac{\Vol(B_1)}{\Vol(B_2)}.
    }
\]
\end{cor}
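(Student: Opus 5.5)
The plan is to integrate the explicit density of Proposition~\ref{prop:filtered-density} over $B_1$ and $B_2$ separately and compare the two integrals using the assumed bounds on $\sum_i p_i/a_i$. Write $\mathbb P_q$ for the law of the filtered weights $p^{(q)}$, viewed as a measure on $\Delta^{n-1}$ with coordinates $(p_1,\ldots,p_{n-1})$. Let $\Vol$ denote Lebesgue measure $dp_1\cdots dp_{n-1}$ in these coordinates. Proposition~\ref{prop:filtered-density} then gives
\[
    \mathbb P_q(B)=\int_B f_q(p)\,dp,\qquad
    f_q(p)=\frac{C_q}{\left(\sum_i p_i/a_i\right)^n},\qquad
    C_q=(n-1)!\prod_i a_i^{-1}>0 .
\]
The only feature of $f_q$ we need is that it is a positive constant times the $(-n)$th power of the linear function $\sum_i p_i/a_i$. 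This function is strictly positive on the simplex, since every $a_i>0$ and the $p_i$ sum to one.

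First, on $B_1$ we have $0<\sum_i p_i/a_i\le h_1$, so $f_q\ge C_q h_1^{-n}$ there. Integrating gives $\mathbb P_q(B_1)\ge C_q h_1^{-n}\Vol(B_1)$. Second, on $B_2$ we have $\sum_i p_i/a_i\ge h_2>0$, so $f_q\le C_q h_2^{-n}$ there. Integrating gives $\mathbb P_q(B_2)\le C_q h_2^{-n}\Vol(B_2)$.

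Since $\Vol(B_2)>0$ and $f_q>0$ everywhere on the simplex, $\mathbb P_q(B_2)$ is positive, so the ratio is well defined. Dividing the first bound by the second cancels $C_q$ and yields
\[
    \frac{\mathbb P_q(B_1)}{\mathbb P_q(B_2)}
    \ge
    \left(\frac{h_2}{h_1}\right)^n\frac{\Vol(B_1)}{\Vol(B_2)},
\]
which is the claimed inequality.

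There is no real obstacle; the argument is a monotone comparison of integrands. The only points that need care are the following. Positivity of $\sum_i p_i/a_i$ makes the power $n$ monotone and keeps the ratio well defined. $\Vol$ must be understood in the same coordinates as the density; any other normalization of simplex volume multiplies numerator and denominator by the same constant and cancels in the ratio.
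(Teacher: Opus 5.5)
Your proof is correct and is essentially the paper's argument: bound $f_q$ below by $C h_1^{-n}$ on $B_1$ and above by $C h_2^{-n}$ on $B_2$, integrate over each set, and divide. Your remarks on the positivity of $\mathbb P_q(B_2)$ and on the normalization of $\Vol$ are valid clarifications that the paper leaves implicit.
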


\begin{proof}
By the density formula,
\[
    f_q(p)\ge C h_1^{-n}\quad\text{on }B_1,
    \qquad
    f_q(p)\le C h_2^{-n}\quad\text{on }B_2,
\]
where
\[
    C=(n-1)!\prod_i a_i^{-1}.
\]
Integrating gives
\[
    \mathbb P_q(B_1)\ge C h_1^{-n}\Vol(B_1),
    \qquad
    \mathbb P_q(B_2)\le C h_2^{-n}\Vol(B_2),
\]
which implies the claim.
\end{proof}

\begin{rem}
This corollary explains why even a low-degree filter can have a large effect in high dimension. The filter only changes the weights by the factors $a_i=|q(\lambda_i)|^2$, but the induced density contains the $n$-th power
\[
    \left(\sum_i p_i/a_i\right)^{-n}.
\]
Thus a modest spectral tilt can significantly change the probability of starting in one basin rather than another.
\end{rem}

We return to the preceding disk spectrum and the same $600$ underlying
Haar vectors. Independently for each trajectory, draw coefficients
$\xi_k\sim\mathcal N_{\mathbb C}(0,1)$ and set
\[
  q_m(\zeta)=\sum_{k=0}^m\xi_k\zeta^k,\qquad
  v_0=\frac{q_m(A/R)u_0}{\|q_m(A/R)u_0\|},\qquad m\in\{1,4\}.
\]
Here $R=\max_i|\lambda_i|$ is known from the constructed test matrix.
Conditional on a drawn polynomial, Proposition~\ref{prop:filtered-density}
 gives its sampling density; averaging over the polynomials gives the
 random-filter ensemble. The residual flow and its time horizon are unchanged.
Figure~\ref{fig:paper-filtered-concentration} compares the resulting initial
and final ensembles. Even degree one substantially broadens the explored
spectral region.

To quantify the improvement, define finite-time geometric coverage by
\[
  C_\varepsilon(Z_T)=\frac1n\#\{i:\min_{z\in Z_T}|\lambda_i-z|\leq\varepsilon\}.
\]
For $\varepsilon=0.05$, the coverages are
$\PaperDiskHaarCoverage\%$, $\PaperDiskOneCoverage\%$, and
$\PaperDiskFourCoverage\%$ for Haar, degree one, and degree four,
respectively, with no increase in the number of trajectories.

\begin{center}
\small
\begin{tabular}{lrrr}
\toprule
Starts & Geometric coverage & Certified coverage & Median $\sqrt{\rho_A}$\\
\midrule
Haar & $\PaperDiskHaarCoverage\%$ & $\PaperDiskHaarCertifiedCoverage\%$ & $\PaperDiskHaarResidual$\\
Degree $1$ & $\PaperDiskOneCoverage\%$ & $\PaperDiskOneCertifiedCoverage\%$ & $\PaperDiskOneResidual$\\
Degree $4$ & $\PaperDiskFourCoverage\%$ & $\PaperDiskFourCertifiedCoverage\%$ & $\PaperDiskFourResidual$\\
\bottomrule
\end{tabular}
\end{center}

Here certified coverage uses only outputs satisfying
$\sqrt{\rho_A}\leq0.05$, with the same coverage radius $0.05$.
The residual acceptance test is more stringent than the planar-distance
test; the distinction is discussed below.

\begin{figure}[H]
  \centering
  \includegraphics[width=\linewidth]{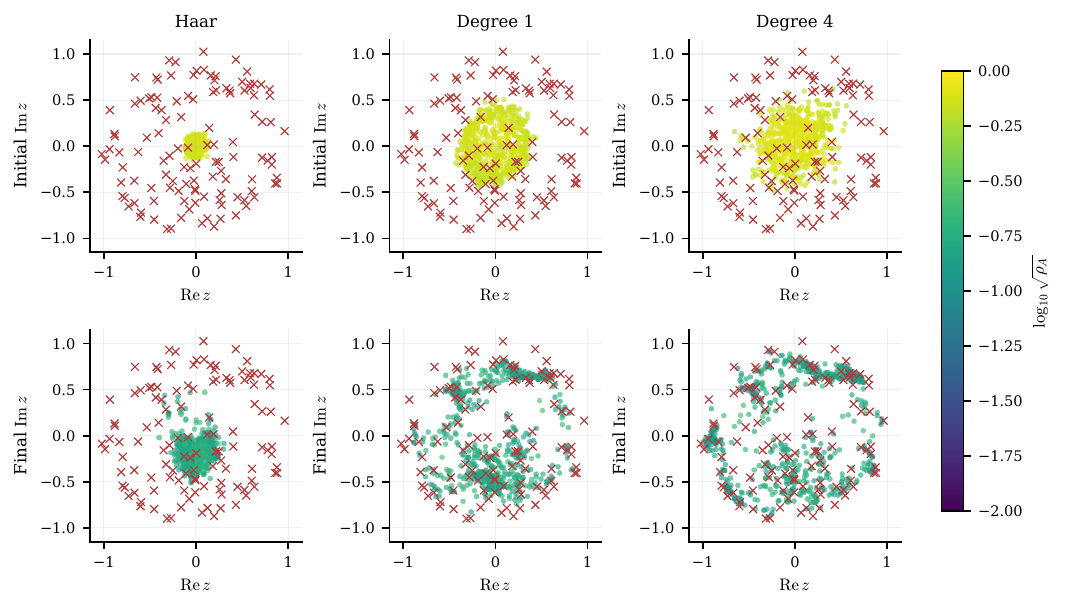}
  \caption{Matched random polynomial filtering for the spectrum in
  Figure~\ref{fig:paper-haar-concentration}. Columns: Haar starts,
  independent degree-one filters, and independent degree-four filters.
  Top: initial states. Bottom: final states at $\tau=30$. All panels
  have the same eigenvalues, axis limits, and residual-color scale.
  Every protocol evolves $600$ trajectories by the same Euler flow;
  filtering costs an additional
  $m$ applications of $A$ per start when evaluated by Horner's rule.}
  \label{fig:paper-filtered-concentration}
\end{figure}

The difference between geometric and certified coverage has a simple
local explanation. Let $A$ be normal with simple spectrum, and let $e_0$
be a unit eigenvector with eigenvalue $\lambda_0$. In an orthonormal
eigenbasis, put $p_i=|u_i|^2$ and $q=1-p_0$. For this fixed spectrum,
as $q\to0$, the weighted-average formulas for $z$ and $\rho_A$ give
\[
    \sin\angle(u,e_0)=\sqrt q,\qquad
    |z-\lambda_0|=O(q),\qquad
    \sqrt{\rho_A}\asymp\sqrt q.
\]
Thus the Rayleigh error is quadratic in the eigenvector error, whereas
the residual norm is first order. Here the comparison is with
$\sqrt{\rho_A}$; the squared residual itself is of order $q$.

In the two-point case, with weights $1-q$ and $q$ on $\lambda_0$ and
$\lambda_1$, put $g=|\lambda_1-\lambda_0|$. Then exactly
\[
    |z-\lambda_0|=gq,\qquad
    \sqrt{\rho_A}=g\sqrt{q(1-q)},\qquad
    \dot q=-4g^2q(1-q)(1-2q).
\]
The last identity follows from
Proposition~\ref{prop:normal-simplex-flow}. For $0<q(0)<1/2$, it gives
$q(t)\sim Ce^{-4g^2t}$ for some $C>0$: the Rayleigh error decays with
rate $4g^2$, while the residual norm and eigenvector error decay with
rate $2g^2$.

This explains why the Rayleigh points in
Figure~\ref{fig:paper-filtered-concentration} can already be close to
the spectrum while their vectors fail the residual acceptance test.
Once trajectories have entered their final capture neighborhoods,
waiting for eigenvector convergence can leave almost the same spatial
picture: further integration sharpens the points toward their limiting
eigenvalues, with smaller residuals and correspondingly different colors
and certified coverage.

\setlength{\bibsep}{4pt plus 1pt minus 1pt}
\bibliographystyle{amsplain}
\bibliography{\PaperBibliography}
\end{document}